\theoremstyle{plain}
\newtheorem{theorem}{Theorem}
\newtheorem{lemma}[theorem]{Lemma}
\newtheorem*{thmsep}{Theorem \ref{sep}}
\newtheorem{prop}[theorem]{Proposition}
\newtheorem{corollary}[theorem]{Corollary}
\theoremstyle{definition}
\theoremstyle{remark}
\newtheorem{remark}[theorem]{Remark}
\newcommand{\lp}{\left(}
\newcommand{\rp}{\right)}
\newcommand{\e}{\epsilon}
\newcommand{\R}{\mathbb{R}}
\newcommand{\Sp}{\mathbb{S}}
\newcommand{\Z}{\mathbb{Z}}
\newcommand{\twoheaddownarrow}{\mathrel{\rotatebox{90}{$\twoheadleftarrow$}}}
\DeclareMathOperator{\dist}{\textup{\text{dist}}}
\DeclareMathOperator{\diam}{\textup{\text{diam}}}
\DeclareMathOperator{\id}{\textup{\text{id}}}
\DeclareMathOperator{\Vol}{\textup{\text{Vol}}}
\DeclareMathOperator{\Area}{\textup{\text{Area}}}
\DeclareMathOperator{\innt}{\textup{\text{int}}}
\DeclareMathOperator{\conv}{\textup{conv}}
\DeclareMathOperator{\Ner}{\textup{Ner}}
\DeclareMathOperator{\inrad}{\textup{in-rad}}
\DeclareMathOperator{\seprad}{\textup{sep-rad}}
\numberwithin{equation}{section}
\numberwithin{theorem}{section}
\begin{document}

\title[Lower bounds for codimension-1 measure]{Lower bounds for codimension-1 measure in metric manifolds}
\author{Kyle Kinneberg}
\address{Department of Mathematics, Rice University, 6100 Main Street, Houston TX 77005}
\email{kyle.kinneberg@rice.edu}

\subjclass[2010]{Primary: 28A75; Secondary: 30L99}
\date{\today}
\keywords{Hausdorff measure, isoperimetric inequality, linearly locally contractible metric manifold}

\begin{abstract}
We establish Euclidean-type lower bounds for the codimension-1 Hausdorff measure of sets that separate points in doubling and linearly locally contractible metric manifolds. This gives a quantitative topological isoperimetric inequality in the setting of metric manifolds, in the sense that lower bounds for the codimension-1 measure of a set depend not on some notion of filling or volume but rather on in-radii of complementary components. As a consequence, we show that balls in a closed, connected, doubling, and linearly locally contractible metric $n$-manifold $(M,d)$ with radius $0<r\leq \diam(M)$ have $n$-dimensional Hausdorff measure at least $c \cdot r^n$, where $c>0$ depends only on $n$ and on the doubling and linear local contractibility constants.
\end{abstract}

\maketitle

\section{Introduction}

According to the traditional Euclidean isoperimetric inequality, there is a dimensional constant $c_n >0$ such that
$$\Area(\partial E) \geq c_n \Vol(E)^{\frac{n-1}{n}}$$ 
for all closed sets $E \subset \R^n$, where ``$\Vol$" denotes $n$-dimensional Hausdorff measure and ``$\Area$" denotes $(n-1)$-dimensional Minkowski content. This inequality, along with the related filling inequalities, have played a central role in the development of geometric analysis and have led to similar results in other geometric settings, including Riemannian manifolds, sub-Riemannian manifolds, discrete graphs, finitely-generated groups, and certain classes of metric measure spaces. Depending on the context, one must interpret $E$, $\partial E$, $\Vol$, and $\Area$ in the correct way; and depending on the geometry involved, the resulting isoperimetric inequalities might look quite different from their Euclidean counterparts. Still, the common theme in them all is to bound $\Area(\partial E)$ from below by some function of $\Vol(E)$.

This theme, however, can break down in highly non-smooth metric settings. Consider, for instance, a metric on $\Sp^2$ that contains subsets at various small scales that have wildly different Hausdorff dimensions but are bounded by finite-length curves. Such metrics could be constructed as Hausdorff limits of certain polyhedral complexes in $\R^3$, defined iteratively by cubical sub-division and replacement rules, much like the ``snow-sphere" constructions of D. Meyer \cite{Mey}. By altering the subdivision rules in different regions that are very far from each other in relative distance, it is possible to make the resulting metric spheres highly non-homogeneous for Hausdorff dimension. Even if there was an appropriate notion of volume in such a space, it would be difficult to find a meaningful relationship between the volumes of sets and the areas, or lengths, of their boundaries.

At the same time, these types of non-smooth metric spaces can arise as geometrically controlled deformations of spaces that do support traditional isoperimetric inequalities. For example, the metric spheres described above are quasisymmetrically equivalent to the Euclidean sphere (this equivalence can even be realized by a continuous family of quasiconformal deformations of $\R^3$, as shown in \cite{Mey}). This observation prompts the natural question: Can one establish isoperimetric-type inequalities for a broad, quasisymmetrically-invariant class of metric spaces that still gives meaningful geometric information?

Our primary purpose in this paper is to prove the following result, which provides a possible answer to this question for the class of doubling and linearly locally contractible closed metric manifolds. It gives a quantitative relationship between the size of a set, measured by its in-radius and the in-radius of its complement, and the area of its boundary, measured by codimension-1 Hausdorff measure.

\begin{theorem} \label{isothm}
Let $(M,d)$ be a closed, connected, metric manifold of dimension $n \geq 1$ that is $D$-doubling and $L$-linearly locally contractible. For $E \subset M$ Borel,
$$\mathcal{H}_{n-1}(\partial E) \geq c \cdot \min(\inrad(E), \inrad(M\backslash E))^{n-1},$$
where $c>0$ depends only on $n$, $D$, and $L$.
\end{theorem}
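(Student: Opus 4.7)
Set $r = \min(\inrad(E), \inrad(M\setminus E))$; by definition there exist balls $B(p_1,r) \subset E$ and $B(p_2,r) \subset M\setminus E$, so $\partial E$ topologically separates $p_1 \in \inter(E)$ from $p_2 \in \inter(M\setminus E)$ in $M$. The aim reduces to establishing a covering lower bound: there exist constants $\rho = \rho(L)$ and $\kappa = \kappa(n,D,L)$ such that for every $0 < s \leq \rho r$, any cover of $\partial E$ by balls of radius $s$ requires at least $\kappa (r/s)^{n-1}$ balls. Once this is in hand, arbitrary covers by sets of diameter at most $\delta$ are handled by replacing each set with a containing ball, binning into dyadic scales, and using the doubling condition to extract the dominant scale; this yields $\mathcal{H}_{n-1}(\partial E) \gtrsim r^{n-1}$ with a constant depending only on $n$, $D$, and $L$.

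\textbf{Map to the nerve via linear local contractibility.} Fix a cover $\{B_i = B(x_i,s)\}_{i \in I}$ of $\partial E$ by balls of radius $s$. By doubling we may thin to controlled multiplicity, and the enlarged balls $B_i' = B(x_i, Ls)$ inherit controlled multiplicity while each $B_i$ is null-homotopic inside $B_i'$ by LLC. Let $N$ be the nerve of $\{B_i'\}_{i \in I}$. A partition of unity subordinate to the enlarged cover produces a continuous map $\Phi_0 : U \to |N|$, where $U = \bigcup_i B_i'$. One then extends $\Phi_0$ to all of $M$ by adjoining to $|N|$ two new vertices $v_+$ and $v_-$ (one for each side of $M \setminus \partial E$) and coning off the $p_1$- and $p_2$-components of $M \setminus U$ onto $v_+$ and $v_-$ respectively. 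Linear local contractibility is essential both in building the nerve map on $U$ and in producing the cone extensions on the two sides.

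\textbf{Topological obstruction on the nerve.} The resulting map $\Phi : M \to X$, where $X = |N| \cup \{v_+, v_-\}$ with the appropriate attaching data, sends $p_1 \mapsto v_+$ and $p_2 \mapsto v_-$. Consequently every continuous path in $M$ from $p_1$ to $p_2$ projects to a path in $X$ from $v_+$ to $v_-$, which must pass through $|N|$. The principal claim is that this forces $|N|$ to contain at least $\kappa (r/s)^{n-1}$ vertices: if the number of vertices were smaller, the bounded-geometry nerve could be simplicially retracted onto an $(n-2)$-dimensional subcomplex, yielding a homotopy of $\Phi$ that joins $v_+$ to $v_-$ inside $X \setminus |N|^{(n-2)}$; pulling this homotopy back through $\Phi$ would produce a continuous deformation connecting $p_1$ and $p_2$ in $M \setminus \partial E$, contradicting separation. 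Making this topological-dimension lower bound on the nerve quantitative and effective in the merely LLC setting is the main obstacle, substituting for the transversality arguments available in a smooth category; this is where the manifold hypothesis on $M$ (not just its dimension) is used, via a codimension-one form of Alexander duality.
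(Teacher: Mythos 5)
Your proposal correctly identifies the broad outline that the paper follows — reduce to a separation statement, build a nerve from a controlled cover using doubling and LLC, and derive a contradiction from a topological obstruction — but there are two concrete gaps that prevent the argument from closing. The first is that a lower bound on covering numbers $N(\partial E, s) \gtrsim (r/s)^{n-1}$ at every scale $s$ bounds the \emph{Minkowski content} of $\partial E$ from below, not the Hausdorff measure: a compact set can have large covering numbers at all scales and still have $\mathcal{H}_{n-1}$-measure zero, and the proposed "binning into dyadic scales and extracting the dominant scale" does not recover a Hausdorff bound from uniform-scale covers (a cover witnessing small $\mathcal{H}_{n-1}^{\delta}$-content may spread over infinitely many scales, none of which covers $\partial E$ alone). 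The paper sidesteps this entirely by normalizing $r=1$, choosing a single cover at a scale $\epsilon=\epsilon(n,D,L)$, and using the fact that the resulting nerve map $f\colon M\to\mathcal{S}$ is $C$-Lipschitz, so $\mathcal{H}_{n-1}(f(S))\le C^{n-1}\mathcal{H}_{n-1}(S)$ transfers the measure hypothesis directly into the simplicial complex. The second gap is in the claimed retraction: "few vertices" does not imply that the nerve retracts onto its $(n-2)$-skeleton. Vertex count controls neither the dimension of the nerve (that is controlled by the multiplicity, hence by $D$, and $D$ can exceed $n$) nor its homotopy type. What the argument actually needs is that the \emph{image} $f(S)$, a set of small $\mathcal{H}_{n-1}$-measure, can be pushed off the $(n-1)$-simplices; that is exactly the Federer--Fleming projection lemma (Proposition \ref{proj}), and it requires a measure bound, not a vertex bound. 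If one instead retracts the whole nerve, the hypothesis required (that the nerve's total $(n-1)$-measure be below the FF threshold) is only satisfied when the number of balls is below a fixed constant, yielding $N\ge \mathrm{const}$ rather than $N\gtrsim (r/s)^{n-1}$ — which, combined with the first gap, gives no usable estimate.

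A secondary point: your cone/suspension construction requires extending $\Phi_0$ over what may be infinitely many components of $M\setminus U$, and carrying the eventual skeleton-retraction through the attached cones introduces further compatibility conditions. The paper avoids constructing an auxiliary space $X$ altogether by building a \emph{return} map $g\colon \mathcal{S}\to M$ (inductively on skeleta, via LLC) and invoking Semmes's criterion (Lemma \ref{Sem}) to get a short homotopy between $\mathrm{id}_M$ and $g\circ p\circ f$; the topological obstruction is then formulated inside $M$ itself, using \v{C}ech cohomology and the duality $\check H^{n-1}(S)\cong H_1(M,M\setminus S)$ (Lemma \ref{hom}), which is where the closed-manifold hypothesis enters — rather than the "codimension-one Alexander duality" you invoke without specifying the ambient space in which it would be applied.
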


Our investigations fall naturally into the setting of quantitative topology, and as such, the methods we use are more topological than geometric. Consequently, the inequalities we obtain mimic what one would expect in Euclidean geometry, but they do not account for the important effects that other geometries can have on isoperimetric relationships. In many ways, this is a necessary loss if we want to study such a broad class of spaces.

\subsection{Definitions and results}

Let $(Z,d)$ be a metric space. Following standard notation, $B(z,r)$ will denote the open ball centered at $z \in Z$ with radius $r>0$.

For $\e >0$, a collection of points $z_1,\ldots,z_m \in Z$ is said to be $\e$-separated if $d(z_i,z_j) \geq \e$ for all $i,j$ distinct. We say that $Z$ is $D$-doubling if any ball $B(z,r)$ contains at most $D$ points that are $r/2$-separated. This is quantitatively equivalent to the condition that every ball can be covered by at most $D'$ balls of half the radius, but we prefer to use the previous formulation as it is easier to work with. A metric space is said to be doubling if it is $D$-doubling for some $D \geq 1$. This property basically functions as a finite dimensionality condition for $Z$.

We say that $Z$ is $L$-linearly locally contractible if any ball $B(z,r)$ with radius $0<r \leq \diam(Z)/L$ can be contracted inside $B(z,Lr)$ to a point. More precisely, there is a continuous map $h \colon [0,1] \times B(z,r) \rightarrow B(z,Lr)$ with $h(0,\cdot)$ the identity inclusion and $h(1,\cdot)$ a constant map. Naturally, $Z$ is said to be linearly locally contractible if this condition holds for some $L \geq 1$. This property prevents thin necks, long fingers, and cusps in $Z$, and generally speaking, it guarantees that topologically large sets have topologically large boundaries (cf. Corollary \ref{homcor}). In a vague sense, then, such spaces have a weak topological ``isoperimetric inequality," and Theorem \ref{isothm} can be viewed as a quantitative extension of this fact.

Both doubling and linear local contractibility have appeared frequently in quasiconformal geometry, especially in the context of quasisymmetric parameterization problems \cite{BK, HeinICM, HS, Sem96, SemNon, Wild}. Indeed, both properties are preserved under quasisymmetric homeomorphisms. They have also appeared in the study of Gromov hyperbolic groups: the boundary of a hyperbolic group equipped with a visual metric is always doubling, and if the boundary is a topological sphere, then it is necessarily linearly locally contractible (cf. Theorem 3.3 in \cite{KleinICM}). Perhaps more familiarly, it is easy to see that a closed Riemannian manifold is doubling and linearly locally contractible, though the associated constants will depend strongly on the geometry of the manifold.

For $E \subset Z$ Borel, we use $\mathcal{H}_k(E)$ to denote the $k$-dimensional Hausdorff measure of $E$. The in-radius of $E$ is defined to be
$$\inrad(E) = \sup \{r \geq 0 : B(x,r) \subset E \text{ for some } x \in Z\}.$$
If $E$ has no interior, note that $\inrad(E)=0$. It will also be convenient for us to define, for $S \subset Z$ closed, the separation radius of $S$ to be
$$\seprad(S) = \sup \left\{ \min(\inrad(U),\inrad(V)) : 
\begin{array}{l}
U,V \text{ are distinct connected} \\
\text{components of } Z \backslash S
\end{array} \right\}.$$
If $Z$ is path connected and $\seprad(S) > r$, then clearly there are points $x,y \in Z$ with $\dist(S,\{x,y\}) > r$ that are separated by $S$, in the sense that any path from $x$ to $y$ necessarily intersects $S$. For convenience, we set $\seprad(S) = 0$ whenever the complement $Z \backslash S$ is connected.

In most of our investigations below, we will focus on metric spaces that have the structure of a topological manifold. Namely, a metric manifold $(M,d)$ is a topological manifold $M$, endowed with a metric $d$, that induces the manifold topology. This should not be confused with the notion of a metrizable manifold, as we concern ourselves with the fixed metric $d$ rather than the fact that $M$ admits some metric. Note that if $M$ is connected, then it is necessarily path connected. 

It is not difficult to see that there is a soft relationship between separation radius and codimension-1 Hausdorff measure for closed sets in metric manifolds. Namely, if $M$ is an $n$-dimensional manifold and $S \subset M$ has $\seprad(S) > 0$, then there are points $x,y \in M \backslash S$ that are separated by $S$. It is a standard fact that the topological dimension of $S$ must therefore be at least $n-1$ (cf. Theorem IV.4 in \cite{HW} and the subsequent corollary), and this implies that $\mathcal{H}_{n-1}(S) >0$ (cf. Theorem VII.2 in \cite{HW}). Thus, positivity of separation radius implies positivity of $(n-1)$-dimensional Hausdorff measure.

We will prove Theorem \ref{isothm} by showing that this relationship is quantitative when $M$ is a doubling and linearly locally contractible closed metric manifold. In fact, the precise bounds we obtain coincide with the expected bounds in Euclidean space.

\begin{theorem} \label{sep}
Let $(M,d)$ be a closed, connected, metric manifold of dimension $n \geq 1$ that is $D$-doubling and $L$-linearly locally contractible. If a closed set $S \subset M$ separates two points $x,y$ in $M$ with $\dist(S, \{x,y\}) \geq r$, then 
$$\mathcal{H}_{n-1}(S) \geq c \cdot r^{n-1},$$ 
where $c>0$ depends only on $n$, $D$, and $L$. In particular, for any closed set $S \subset M$, we have $\mathcal{H}_{n-1}(S) \geq c \cdot \seprad(S)^{n-1}$.
\end{theorem}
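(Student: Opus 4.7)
I plan to prove Theorem~\ref{sep} by contradiction, using a quantitative nerve argument combined with Alexander-type duality in the $n$-manifold $M$.  It suffices to lower-bound the Hausdorff content $\mathcal{H}^\infty_{n-1}(S) := \inf\{\sum_i \rho_i^{n-1} : S \subset \bigcup_i B(z_i, \rho_i)\}$, since $\mathcal{H}^\infty_{n-1}(S) \leq \mathcal{H}_{n-1}(S)$.  Assume for contradiction that $S$ admits a cover by balls $\{B_i = B(z_i, \rho_i)\}_{i \in I}$ with $\sum_i \rho_i^{n-1} < c_0 r^{n-1}$ for a small $c_0 = c_0(n, D, L)$.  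A first reduction: if some $\rho_i \geq r/(CL)$ with $C = C(n, D, L)$, then already $\rho_i^{n-1} \gtrsim r^{n-1}$, so we may assume all $\rho_i$ are much smaller than $r/L$.  Apply a greedy Vitali-type thinning and $D$-doubling to arrange that the enlargements $\hat{B}_i = B(z_i, \Lambda \rho_i)$ still cover $S$ and are boundedly overlapping, where $\Lambda = \Lambda(L)$ is large enough that linear local contractibility makes each $B_i$ nullhomotopic in $\hat{B}_i$ and makes bounded clusters of the $\hat{B}_i$ contractible in slightly further enlargements.

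Setting $U = \bigcup_i \hat{B}_i$ and using a partition of unity subordinate to this cover, I would build a continuous ``nerve approximation'' map $\Phi : U \to |N|$ to the geometric realization of the nerve $N$ of $\{\hat{B}_i\}$, together with a companion $\Psi : |N| \to M$ so that $\Psi \circ \Phi$ is homotopic to the inclusion $U \hookrightarrow M$.  The map $\Phi$ sends each $\hat{B}_i$ into the open star of the vertex $v_i \in |N|$, giving it an effectively multi-scale Lipschitz character with local scale $\rho_i$.  The heart of the argument is to exploit that $S \subset U$ separates $x$ from $y$: by Alexander/Poincar\'e--Lefschetz duality with $\mathbb{Z}/2$ coefficients, applied locally in the pair $(B(x, r'), B(x, r') \cap U)$ for a suitable $r' \in (0, r)$, this separation produces a continuous map $f : \mathbb{S}^{n-1} \to |N|$ of nontrivial mod-$2$ degree, obtained by pushing a topological sphere around $x$ into $U$ and composing with $\Phi$.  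A covering-and-counting argument then uses that $f$ must ``wrap around'' a piece of metric size $\sim r^{n-1}$ in $M$ while $\Phi$ has effective scale $\rho_i$ near each $\hat{B}_i$; combined with the bounded multiplicity from doubling, this forces $\sum_i \rho_i^{n-1} \geq c_1 r^{n-1}$, contradicting the assumption for $c_0 < c_1$.

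The main obstacle is the final quantitative counting step.  The qualitative fact that a separating set has nontrivial $(n-1)$-dimensional topology is standard, but converting it into the weighted estimate $\sum_i \rho_i^{n-1} \geq c r^{n-1}$ for \emph{multi-scale} ball covers requires a metric refinement of the nerve theorem in which the effective measure of each simplex is weighted by the radii of its participating balls.  One must also be careful about non-orientability and the possibly complicated global topology of $M$, which is handled by working with $\mathbb{Z}/2$ coefficients inside the ball $B(x, r)$ where the duality is unambiguous.  Once this quantitative nerve comparison is in place, the Vitali thinning, the nerve construction, and the final contradiction are comparatively routine.
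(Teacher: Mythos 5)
Your plan shares the paper's general architecture—nerve approximation of a metric space, $\Z_2$-duality to detect separation, and a contradiction argument—but the key quantitative step that you identify as ``the main obstacle'' is exactly where the paper does the real work, and your sketch does not contain the idea that resolves it.

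The paper's proof uses a \emph{fixed-scale} cover of all of $M$ (with scale depending only on $D$, $L$, $n$), which produces a single Lipschitz map $f\colon M\to\mathcal{S}$ into a fixed nerve $\mathcal{S}$ and a companion $g\colon\mathcal{S}\to M$ with $g\circ p\circ f\simeq\id_M$ by a short homotopy (Proposition \ref{fg}). Since $f$ is $C$-Lipschitz, $\mathcal{H}_{n-1}(f(S))\leq C^{n-1}\mathcal{H}_{n-1}(S)$, and the crucial tool is a Federer--Fleming projection (Proposition \ref{proj}): if $\mathcal{H}_{n-1}(f(S))$ is below a dimensional threshold, a simplex-preserving map $p\colon\mathcal{S}\to\mathcal{S}$ pushes $f(S)$ into the $(n-2)$-skeleton. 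Then $\check{H}^{n-1}$ of the image vanishes, contradicting the nontriviality of $(g\circ p\circ f)^\ast$ on $\check{H}^{n-1}$ supplied by Lemma \ref{hom}. This projection step is precisely the ``metric refinement of the nerve theorem in which the effective measure of each simplex is weighted'' that you correctly intuit is needed but do not supply. Without it, your degree/wrapping argument has no mechanism to turn topological nontriviality into the weighted estimate $\sum_i\rho_i^{n-1}\gtrsim r^{n-1}$: degree is a topological invariant with no direct grip on the radii $\rho_i$, and the nerve of an adaptive cover of $S$ is not a manifold, so ``mod-$2$ degree'' of a map $\Sp^{n-1}\to|N|$ is not well-defined without further interpretation.

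Two further concerns. First, you propose to lower-bound Hausdorff \emph{content} rather than measure, which is a strictly stronger assertion, but the Federer--Fleming argument (and the radial-projection integral in Lemma \ref{projlem}) requires $\mathcal{H}_{n-1}(E)<\infty$, not just small content; so if you insist on content you will need a different projection mechanism, which you have not indicated. The paper's choice of measure sidesteps this: if $\mathcal{H}_{n-1}(S)=\infty$ the theorem is trivial, and otherwise the projection applies. Second, your adaptive multi-scale cover of $S$ (with radii $\rho_i$ tailored to $S$) produces a nerve that varies with $S$ and whose simplices have no uniform geometry, making the companion map $\Psi\colon|N|\to M$ and its interaction with linear local contractibility significantly more delicate than in the fixed-scale setting; the paper's fixed-scale nerve is built once (via Lemma \ref{cover} and Semmes's Lemma \ref{Sem}) and works for every $S$ simultaneously. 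Finally, the paper carries out the duality globally on the closed manifold $M$ via $\check{H}^{n-1}(S)\cong\tilde{H}_1(M,M\setminus S)$, which cleanly encodes separation; your local version inside $B(x,r')$ needs care since $S$ need not be compactly contained there and the separation can involve global topology of $M$.
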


\begin{remark} \label{isormk}
Observe that Theorem \ref{isothm} follows directly from Theorem \ref{sep}. Indeed, for $E \subset M$ Borel, the closed set $S = \partial E$ satisfies the hypothesis in Theorem \ref{sep} with $r = \min(\inrad(E), \inrad(M\backslash E))$. The remainder of this paper will therefore focus on Theorem \ref{sep}.
\end{remark}

Before moving on, let us explain why the imposed conditions on $M$ are necessary. First, we note that it really is necessary to define the separation radius using two distinct complementary components of $S$, and consequently that both $\inrad(E)$ and $\inrad(M\backslash E)$ must appear in the isoperimetric relationship. Indeed, the boundary of a tiny ball in $\Sp^n$ has tiny $(n-1)$-dimensional measure, but it has a complementary component with very large in-radius. To understand why we require $M$ to be a closed manifold, consider a solid dumbbell: a compact 3-manifold with boundary formed by connecting two solid balls of unit radius with a very thin solid cylinder, say with unit length and radius $\e>0$. This space is doubling and linearly locally contractible with constants independent of $\e$. However, a disk that arises as a slice of the cylinder has separation radius comparable to 1, while its 2-dimensional Hausdorff measure is comparable to $\e^2$. For the necessity of linear local contractibility (or a similar condition), consider the boundary of the solid dumbbell. This is topologically $\Sp^2$ and is doubling with constant independent of $\e$, but loops that surround the cylinder have separation radius comparable to 1 and length comparable to $\e$. Of course, the linear local contractibility constant here is comparable to $1/\e$.

Another immediate consequence of Theorem \ref{sep} is that metric balls have volume at least as large as comparison balls in Euclidean space, up to a quantitative constant. An analogous statement for Riemannian manifolds was obtained by R.~E.~Greene and P.~Petersen in \cite{GP}. There, the authors use local contractibility estimates to obtain lower bounds on the codimension-1 volume of geodesic spheres, and this translates into lower bounds on the volume of balls. 

\begin{corollary} \label{vol}
Let $(M,d)$ be a closed, connected, metric manifold of dimension $n \geq 1$ that is $D$-doubling and $L$-linearly locally contractible. Then for each $x \in M$ and $0<r \leq \diam(M)$, we have
$$\mathcal{H}_n(B(x,r)) \geq c \cdot r^n,$$
where $c >0$ depends only on $n$, $D$, and $L$.
\end{corollary}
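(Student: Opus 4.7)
The plan is to combine the separation bound from Theorem \ref{sep} with the coarea inequality applied to the $1$-Lipschitz distance function $f_x(y) = d(x,y)$. Each metric sphere $S_t := \{y \in M : d(x,y) = t\}$ should contribute at least $c \cdot t^{n-1}$ to a coarea integral, and integrating from $0$ up to a fixed fraction of $r$ will produce the desired $r^n$ lower bound on $\mathcal{H}_n(B(x,r))$.

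First I would produce a far-away point. Because $M$ is a closed manifold it is compact, so $\diam(M)$ is attained by some pair $u,v \in M$; the triangle inequality then yields $z \in \{u,v\}$ with $d(x,z) \geq \diam(M)/2 \geq r/2$. For every $t \in (0, r/4]$, the sphere $S_t$ is nonempty by the intermediate value theorem applied to $d(x,\gamma(\cdot))$ along a path from $x$ to $z$ in the path-connected manifold $M$, and it separates the disjoint open sets $\{d(x,\cdot) < t\} \ni x$ and $\{d(x,\cdot) > t\} \ni z$. One checks that $\dist(x, S_t) = t$ and $\dist(z, S_t) \geq d(x,z) - t \geq r/2 - t \geq t$, so Theorem \ref{sep} applies and gives
$$\mathcal{H}_{n-1}(S_t) \geq c_0 \cdot t^{n-1},$$
with $c_0 > 0$ depending only on $n$, $D$, and $L$.

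Next I would invoke Eilenberg's coarea inequality for a $1$-Lipschitz map between metric spaces, applied to $f_x$ and the Borel set $A = B(x, r/4)$:
$$\int_0^\infty \mathcal{H}_{n-1}\bigl(A \cap f_x^{-1}(t)\bigr) \, dt \leq C_n \cdot \mathcal{H}_n(B(x, r/4)).$$
Since $S_t = f_x^{-1}(t)$ lies entirely in $B(x, r/4)$ precisely when $t < r/4$, the left-hand side equals $\int_0^{r/4} \mathcal{H}_{n-1}(S_t)\, dt$, which by the previous step is at least $\int_0^{r/4} c_0 t^{n-1}\, dt = c_0 (r/4)^n / n$. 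Rearranging and using the inclusion $B(x, r/4) \subset B(x, r)$ then yields $\mathcal{H}_n(B(x,r)) \geq c \cdot r^n$ with $c = c_0/(n \cdot 4^n \cdot C_n)$.

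The main obstacle is not a serious one once Theorem \ref{sep} is in hand: everything hinges on the scale-uniformity of the constant in that theorem, which is exactly what permits the integrand $c_0 t^{n-1}$ to be integrated cleanly to produce the correct power $r^n$. The only other input is Eilenberg's coarea inequality for Lipschitz functions on metric spaces, which is classical and valid in this generality.
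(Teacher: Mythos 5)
Your argument is correct and follows the same route as the paper's proof: apply Theorem \ref{sep} to level sets of the $1$-Lipschitz distance function $d(x,\cdot)$, then integrate via Eilenberg's coarea inequality. The paper restricts attention to $t \in [r/8, r/4]$ (where $\mathcal{H}_{n-1}(S_t) \geq c\, r^{n-1}$) rather than integrating $c_0 t^{n-1}$ over all of $(0,r/4]$, and it uses the upper integral on the left side of Eilenberg's inequality since $t \mapsto \mathcal{H}_{n-1}(S_t)$ need not be Lebesgue measurable --- a small technical point you should also include, though it does not affect your conclusion because the lower bound $c_0 t^{n-1}$ is measurable.
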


\begin{proof}
Fix $x \in M$ and $0<r \leq \diam(M)$. Consider the $1$-Lipschitz function $z \mapsto d(x,z)$ on the ball $B(x,r)$, and let $S_t = \{z : d(x,z)=t \}$ denote its level sets. By Eilenberg's inequality (cf. Theorem 13.3.1 in \cite{BZ}), we have
$$\int_{[0,r)}^* \mathcal{H}_{n-1}(S_t) dt \leq C \cdot \mathcal{H}_n(B(x,r)),$$
where $\int^*$ denotes the upper integral (to avoid issues of measurability) and $C$ depends only on $n$. As $r \leq \diam(M)$ there is a point $y \in M$ with $d(x,y) \geq r/2$, so for any $r/8 \leq t \leq r/4$, the points $x$ and $y$ are separated by $S_t$ and have $\dist(S_t,\{x,y\}) \geq r/8$. Theorem \ref{sep} then guarantees that $\mathcal{H}_{n-1}(S_t) \geq c\cdot r^{n-1}$ for $r/8 \leq t \leq r/4$, and this gives the desired bound.
\end{proof}

We should remark that Corollary \ref{vol} is not entirely new. Indeed, a deep theorem of S. Semmes, Theorem 1.29(a) in \cite{Sem96}, guarantees the following much stronger property. For each $x \in M$ and $0<r \leq \diam(M)$ as above, there is a surjective map $f \colon M \rightarrow \Sp^n$ that is $C/r$-Lipschitz and is constant outside of $B(x,r/2)$. Again, the constant $C$ depends only on $n$, $D$, and $L$. As $n$-dimensional Hausdorff measure can increase by at most a factor of $\lambda^n$ under a $\lambda$-Lipschitz map, Corollary \ref{vol} is a trivial consequence of the existence of such a map $f$. 

At the same time, constructing this map $f$ requires a lot of work, and the lower volume bound statements for balls are much weaker. Moreover, statements about codimension-1 volume bounds do not seem to follow immediately from Semmes's results. We should remark, however, that our proof of Theorem \ref{sep} uses many of the ideas developed in \cite{Sem96}. Our presentation of them is, as one should expect, somewhat simpler because our goals are less ambitious.

Before setting off, let us give a broad outline of the proof of Theorem \ref{sep}. By scaling the metric, we may assume that $r=1$, so our goal is to show that $\mathcal{H}_{n-1}(S)$ has a uniform lower bound. The main idea, which comes from \cite{Sem96}, is to approximate $M$ by a simplicial complex $\mathcal{S}$ that arises as the nerve of an appropriately chosen open cover of $M$. The complex $\mathcal{S}$ approximates $M$ in a partially quantitative way, in that there exists a uniformly Lipschitz map $f \colon M \rightarrow \mathcal{S}$ and a continuous map $g \colon \mathcal{S} \rightarrow M$ for which $g \circ f$ is homotopic to the identity on $M$, through a homotopy that moves points by distance at most $1/4$.

Now, suppose that $\mathcal{H}_{n-1}(S)$ was very small. Then its image $f(S)$ in $\mathcal{S}$ would still have very small $(n-1)$-dimensional measure. Using the simplicial structure of $\mathcal{S}$, an argument from the heart of the Federer--Fleming Deformation Theorem allows us to project $f(S)$ into the $(n-2)$-dimensional skeleton of $\mathcal{S}$. Applying the map $g$ to the projected image takes us back into $M$, giving a closed set $S'$ that is homotopic to $S$, again through a homotopy of $M$ that moves points by distance at most $1/4$. One can show that $S'$ must still separate the points $x$ and $y$ in $M$, and in principle this should contradict the fact that $S'$ is the image of an $(n-2)$-dimensional object. However, we have no control on the modulus of continuity for $g$, so we cannot conclude that $S'$ has topological dimension at most $n-2$. Instead, we will use some homology arguments to obtain the desired contradiction.

The structure of this paper is essentially the reverse of the outline just given. In the next section we develop the homological tools needed for the ending argument. In Section \ref{secdef} we prove the statement about projecting small sets in simplicial complexes into appropriate-dimensional skeleta. For the most part, this is a simplified version of the Federer--Fleming Deformation Theorem, though we do it for sets rather than for currents. Section \ref{secapprox} is devoted to approximating a metric space by a simplicial complex, and it is here that linear local contractibility plays an essential role. In Section \ref{secend} we prove Theorem \ref{sep}, which is not difficult given the content of the previous sections.

\subsection*{Acknowledgements}
It is a pleasure to thank Mario Bonk for many conversations related to this project, which arose in discussions about the author's dissertation project. The author also thanks Stephen Semmes for pointing out the reference \cite{GP} and the anonymous referee for helpful comments.

\section{Some necessary topology}

In this section, we establish Lemma \ref{hom}, which contains the topological tools needed to prove Theorem \ref{sep}. No metrics appear here, but we make substantial use of homology and cohomology groups of topological spaces. As we do not want to deal with issues of orientation, all of these groups will have coefficients in $\Z_2$. Generally, we work with singular homology $H_\ast(X)$ or its relative version $\tilde{H}_\ast(X)$, and we work with \v{C}ech cohomology $\check{H}^\ast(X)$ to avoid problems with potential pathologies of the topological space $X$. 

\begin{lemma} \label{hom}
Let $M$ be a closed connected manifold, and let $S \subset M$ be a closed subset. If $S$ separates two points $x,y \in M$, then $\check{H}^{n-1}(S)$ is non-trivial. Moreover, if $h \colon S \rightarrow M$ is a continuous map that is homotopic to the inclusion $\iota \colon S \hookrightarrow M$ through maps whose images are disjoint from $\{x,y\}$, then the induced homomorphism $h^\ast \colon \check{H}^{n-1}(h(S)) \rightarrow \check{H}^{n-1}(S)$ is non-trivial, and the closed set $h(S)$ separates $x$ and $y$ as well.
\end{lemma}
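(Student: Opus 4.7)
The plan is to apply Poincar\'e-Lefschetz duality throughout with $\Z_2$ coefficients, which gives natural isomorphisms $\check{H}^{n-1}(T) \cong H_1(M, M \setminus T)$ for every compact $T \subset M$. Under this duality, the class dual to ``$S$ separates $x$ from $y$'' will be represented by $[\gamma] \in H_1(M, M \setminus S)$, where $\gamma$ is any path from $x$ to $y$ in $M$ viewed as a relative $1$-cycle.

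For the first assertion, I would run the long exact sequence
$$H_1(M, M \setminus S) \xrightarrow{\partial} H_0(M \setminus S) \to H_0(M).$$
The class $[x]+[y] \in H_0(M \setminus S)$ is nonzero since $x, y$ lie in distinct components, and it maps to $2[\ast] = 0$ in $H_0(M) = \Z_2$. Exactness lifts it through $\partial$ to a nonzero class $\alpha_S \in H_1(M, M \setminus S)$, concretely $[\gamma]$, and duality produces the nonzero class $\beta_S \in \check{H}^{n-1}(S)$.

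For the ``moreover'' part, set $U = M \setminus \{x,y\}$. The hypothesis places the homotopy inside $U$, so viewing $\iota$ and $h$ as maps $S \to U$ they induce equal homomorphisms on \v{C}ech cohomology. Poincar\'e-Lefschetz duality for the open submanifold $U$ gives $\check{H}^{n-1}(U) \cong H_1(M, \{x,y\})$; let $\eta \in \check{H}^{n-1}(U)$ correspond to $[\gamma]$. Naturality with respect to the inclusion $S \hookrightarrow U$ (equivalently, the pair inclusion $(M,\{x,y\}) \hookrightarrow (M, M \setminus S)$) identifies $\iota^*(\eta)$ with $\beta_S$. Homotopy invariance then forces the pullback of $\eta$ along $h\colon S \to U$ to equal $\beta_S$ as well. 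Factoring $h = j \circ \bar h$ with $\bar h \colon S \twoheadrightarrow h(S)$ and $j \colon h(S) \hookrightarrow U$, we rewrite this as $\bar h^*(j^*(\eta)) = \beta_S \neq 0$. Since $\bar h^*$ is precisely the map $h^*$ of the lemma statement, it is non-trivial, and $j^*(\eta) \in \check H^{n-1}(h(S))$ is nonzero. Naturality once more identifies $j^*(\eta)$ with the image of $[\gamma]$ in $H_1(M, M \setminus h(S))$, and the vanishing of this last class is precisely equivalent to the existence of a $1$-chain in $M \setminus h(S)$ with boundary $[x]+[y]$, i.e., to $h(S)$ failing to separate $x$ and $y$. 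Hence $h(S)$ must separate.

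The main obstacle I anticipate is the bookkeeping around duality: verifying that the \v{C}ech-Poincar\'e-Lefschetz isomorphism is natural with respect to inclusions of possibly wild compacta (which is why \v{C}ech, rather than singular, cohomology is needed on $S$ and $h(S)$), and justifying the open-manifold form $\check{H}^{n-1}(U) \cong H_1(M, \{x,y\})$ (obtainable from Poincar\'e-Lefschetz applied to $M$ cut along small disks around $x$ and $y$). The case $n=1$ is degenerate (there $M \cong S^1$ and $U$ is disconnected), and I would handle it by a direct check.
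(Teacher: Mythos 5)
Your strategy for the first assertion coincides with the paper's. For the ``moreover'' part you have found a genuinely different route: the paper introduces the \emph{trace set} $T$ of the homotopy (the image of $[0,1]\times S$), which is a compact subset containing both $S$ and $h(S)$, and compares all three via inclusions $h(S)\hookrightarrow T\hookleftarrow S$, staying entirely within the Spanier duality $\check H^{n-1}(K)\cong H_1(M,M\setminus K)$ for compact $K$. You instead take $U=M\setminus\{x,y\}$ as the common ambient space, use homotopy invariance of $\check H^*$ for the maps $\iota,h\colon S\to U$ directly, and pull back a class $\eta\in\check H^{n-1}(U)$. Your approach is conceptually cleaner (the intermediary is canonical and does not depend on the chosen homotopy), but it pays for that by needing a duality statement for the \emph{open} set $U$, say via Borel--Moore homology, together with the compatibility of that isomorphism with the compact-set duality under the inclusions $S\hookrightarrow U$ and $h(S)\hookrightarrow U$. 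You flag this as the main obstacle, correctly; the paper's choice of $T$ neatly sidesteps it by keeping every space compact, so that a single invocation of the cited theorem in Spanier, together with its built-in naturality for inclusions of compact pairs, suffices.

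There is one point that needs tightening beyond the duality bookkeeping. In the last sentence you assert that vanishing of $[\gamma]$ in $H_1(M, M\setminus h(S))$ is \emph{equivalent} to the existence of a $1$-chain in $M\setminus h(S)$ with boundary $[x]+[y]$. Only one implication is correct: if $[\gamma]=0$ then such a chain exists (the relative part of any nullhomology), but conversely the existence of such a chain $c'$ only shows $[\gamma]=[\gamma-c']$, i.e.\ $[\gamma]$ lies in the image of $H_1(M)\to H_1(M,M\setminus h(S))$, not that it vanishes. So ``$j^*(\eta)\neq 0$'' for your one chosen $\gamma$ does not, by itself, rule out a different path from $x$ to $y$ avoiding $h(S)$. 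The fix is immediate and matches what the paper implicitly does: the whole argument was carried out for an \emph{arbitrary} path $\gamma$ from $x$ to $y$, and what $[\gamma]\neq 0$ in $H_1(M,M\setminus h(S))$ directly gives is that $\gamma$ is not contained in $M\setminus h(S)$ (a chain lying in the subspace is trivially a relative boundary). Running this for every path $\gamma$ shows every path meets $h(S)$, which is the definition of separation. Rephrase the final step this way and the argument closes.
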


\begin{proof}
We first show that $\check{H}^{n-1}(S)$ is non-trivial. By duality (see p. 296 in \cite{Spa}) there is an isomorphism between $\check{H}^{n-1}(S)$ and the relative singular homology group $H_1(M,M\backslash S)$, again with coefficients in $\Z_2$. As $H_1(M,M\backslash S)$ and its reduced version $\tilde{H}_1(M,M\backslash S)$ are isomorphic, it suffices to show that $\tilde{H}_1(M,M\backslash S) \neq 0$. To this end, consider the long exact sequence of reduced relative homology groups
$$\cdots \rightarrow \tilde{H}_1(M,M\backslash S) \rightarrow \tilde{H}_0(M\backslash S) \rightarrow \tilde{H}_0(M) \rightarrow 0,$$
and note that $\tilde{H}_0(M) = 0$ because $M$ is connected. In particular, the map $\tilde{H}_1(M,M\backslash S) \rightarrow \tilde{H}_0(M\backslash S)$ is surjective. As $S$ separates $x$ and $y$ in $M$, the set $M \backslash S$ has at least two path components, which implies that $\tilde{H}_0(M\backslash S) \neq 0$. By surjectivity, we conclude that $\tilde{H}_1(M,M\backslash S) \neq 0$ as well.

We now turn to the second part of the lemma, where $h \colon S \rightarrow M$ is continuous and homotopic to the inclusion $\iota \colon S \hookrightarrow M$ via a homotopy whose trace set $T$ is disjoint from $\{x,y\}$. Note that $T \subset M$ is a closed subset and that there are natural inclusion maps
$$\iota_1 \colon h(S) \hookrightarrow T \hspace{0.3cm} \text{ and } \hspace{0.3cm} \iota_2 \colon S \hookrightarrow T.$$
For simplicity, let $h_2 \colon S \rightarrow T$ be given by $h_2 = \iota_1 \circ h$, so that $h_2$ is homotopic to the map $\iota_2$ in $T$.

All of these maps induce dual homomorphisms between the relevant \v{C}ech cohomology groups. Note that $h_2^\ast \colon \check{H}^{n-1}(T) \rightarrow \check{H}^{n-1}(S)$ factors as $h_2^\ast = h^\ast \circ \iota_1^\ast$, where $\iota_1^\ast \colon  \check{H}^{n-1}(T) \rightarrow \check{H}^{n-1}(h(S))$ and $h^\ast \colon  \check{H}^{n-1}(h(S)) \rightarrow \check{H}^{n-1}(S)$. Thus, to show that $h^\ast$ is non-trivial, it suffices to show that $h_2^\ast$ is non-trivial. Moreover, as $h_2$ is homotopic to $\iota_2$ in $T$, the maps $h_2^\ast$ and $\iota_2^\ast \colon \check{H}^{n-1}(T) \rightarrow {H}^{n-1}(S)$ are the same. Thus, we wish to show that $\iota_2^\ast$ is non-trivial.

To this end, consider the following diagram, where the vertical maps are obtained from what we established in the first part of the lemma.
$$\begin{array}{ccc}
\check{H}^{n-1}(T)  &   \stackrel{\iota_2^\ast}{\longrightarrow}   &  \check{H}^{n-1}(S) \\
\updownarrow         &                                                                          & \updownarrow \\
\tilde{H}_1(M,M\backslash T) &             &    \tilde{H}_1(M,M\backslash S) \\
\twoheaddownarrow   &                 &    \twoheaddownarrow \\
\tilde{H}_0(M\backslash T) &    \longrightarrow   &    \tilde{H}_0(M\backslash S)
\end{array}$$
The map at the bottom of the diagram is induced by the inclusion $M\backslash T \hookrightarrow M\backslash S$. Thus, the horizontal maps are both induced by inclusions, and as the vertical maps are natural with respect to inclusion, the diagram commutes. 

By assumption, $T$ is disjoint from $\{x,y\}$, so that $[x]$ and $[y]$ are elements of $\tilde{H}_0(M\backslash T)$. Abusing notation, we also use $[x]$ and $[y]$ to denote the images of these elements in $\tilde{H}_0(M\backslash S)$ under the bottom map in the diagram. Observe that $[x]-[y] \neq 0$ in both homology groups because $S$, and hence $T$, separates $x$ and $y$. 

The vertical maps in the diagram are surjective, so there is $\alpha \in \check{H}^{n-1}(T)$ that maps to $[x]-[y]$ in $\tilde{H}_0(M\backslash T)$. In particular $\alpha \neq 0$. Now note that $\iota_2^\ast(\alpha) \in \check{H}^{n-1}(S)$ maps to the non-zero element $[x]-[y]$ in $\tilde{H}_0(M\backslash S)$ because the diagram commutes. We can conclude that $\iota_2^\ast(\alpha) \neq 0$, so the map $\iota_2^\ast$ is non-trivial.

Finally, we show that the closed set $h(S)$ separates $x$ and $y$. To this end, observe that we have a similar diagram to the one used above, where $S$ is replaced by its image $h(S)$.
$$\begin{array}{ccc}
\check{H}^{n-1}(T)  &   \stackrel{\iota_1^\ast}{\longrightarrow}   &  \check{H}^{n-1}(h(S)) \\
\updownarrow         &                                                                          & \updownarrow \\
\tilde{H}_1(M,M\backslash T) &     \stackrel{j_\ast}{\longrightarrow}        &    \tilde{H}_1(M,M\backslash h(S)) \\
\twoheaddownarrow   &                 &    \\
\tilde{H}_0(M\backslash T) &       &   
\end{array}$$
Recall that $\iota_1 \colon h(S) \hookrightarrow T$ is simply the inclusion. Now we also use $j \colon M \backslash T \hookrightarrow M\backslash h(S)$ to denote the complementary inclusion, and $j_\ast$ is the induced homomorphism on reduced relative homology groups. Once again, this diagram commutes because all horizontal maps are obtained from inclusions.

Let $\gamma$ be a path in $M$ with endpoints $x$ and $y$, and let $[\gamma]$ denote the corresponding element in $\tilde{H}_1(M,M\backslash T)$. The vertical map $\tilde{H}_1(M,M\backslash T) \twoheadrightarrow \tilde{H}_0(M\backslash T)$ sends $[\gamma]$ to $[x]-[y]$, which is non-zero (recall that we are working with $\Z_2$ coefficients, so the orientation on $\gamma$ does not matter). Moreover, if $\alpha \in \check{H}^{n-1}(T)$ is the element that corresponds to $[\gamma]$ under the isomorphism between $\check{H}^{n-1}(T)$ and $\tilde{H}_1(M,M\backslash T)$, then the vertical map on the left sends $\alpha$ to $[x]-[y]$ in $\tilde{H}_0(M\backslash T)$. By what we established above, we know that $\iota_2^\ast(\alpha)$ is a non-zero element of $\check{H}^{n-1}(S)$.

We claim that $\iota_1^{\ast}(\alpha)$ is non-zero in $\check{H}^{n-1}(h(S))$ as well. Indeed, recall that $h_2 = \iota_1 \circ h$, so that $h_2^\ast = h^\ast \circ \iota_1^\ast$. However, we also have $h_2^\ast = \iota_2^\ast$ because $h_2$ is homotopic to $\iota_2$, so we can write $\iota_2^\ast = h^\ast \circ \iota_1^\ast$. Using that $\iota_2^\ast(\alpha) \neq 0$, it is clear that $\iota_1^\ast(\alpha) \neq 0$ also.

As the diagram above commutes, we know that $j_\ast([\gamma])$ is a non-zero element of $\tilde{H}_1(M,M\backslash h(S))$. Of course, $j_\ast([\gamma])$ is obtained simply by viewing the path $\gamma$ as an element in the group $\tilde{H}_1(M,M\backslash h(S))$. Hence, $j_\ast([\gamma]) \neq 0$ implies that $\gamma$ intersects the set $h(S)$, which is what we needed to show.
\end{proof}

At this point, it might seem as if the statement ``$h^\ast \colon \check{H}^{n-1}(h(S)) \rightarrow \check{H}^{n-1}(S)$ is non-trivial" is only a technical tool to prove the softer (and seemingly obvious) statement that $h(S)$ separates $x$ and $y$. In fact, it is precisely the non-triviality of this group homomorphism that we will need to use later.

Let us also note that it is here in Lemma \ref{hom} where we see the importance of $M$ being a closed manifold. For example, even Euclidean space fails to satisfy the conclusion of the lemma, as one can see by taking $S$ to be a hyperplane. Similarly, solid balls fail this lemma, and so do the solid dumbbells that we discussed in the previous section.

Before finishing this section, let us record a consequence of Lemma \ref{hom} for linearly locally contractible manifolds. We will not use this result in the sequel, but it justifies a remark from the introduction and serves as a weak version of the isoperimetric relationship.

\begin{corollary} \label{homcor}
Let $(M,d)$ be a closed, connected, metric manifold of dimension $n \geq 1$ that is $L$-linearly locally contractible. If $S \subset M$ is a closed subset, then
$$\diam(S) \geq \frac{1}{L}\seprad(S).$$
\end{corollary}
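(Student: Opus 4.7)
The plan is to argue by contradiction using the second half of Lemma \ref{hom}. Suppose, toward a contradiction, that $\diam(S) < \seprad(S)/L$. By the definition of $\seprad$, I can pick a number $r$ with $L\cdot\diam(S) < r < \seprad(S)$ and two distinct complementary components $U,V$ of $M\setminus S$ containing balls $B(x,r)\subset U$ and $B(y,r)\subset V$. In particular $S$ separates $x$ from $y$ and $\dist(z,\{x,y\})\geq r$ for every $z\in S$.

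Next, I would fix an arbitrary $z\in S$ and a radius $r_0\in(\diam(S),r/L]$. Since $r_0>\diam(S)$, the open ball $B(z,r_0)$ contains all of $S$; and since $r_0\leq r/L\leq \seprad(S)/L\leq \diam(M)/L$, linear local contractibility provides a continuous map $h\colon [0,1]\times B(z,r_0)\to B(z,Lr_0)$ with $h(0,\cdot)$ the inclusion and $h(1,\cdot)$ a constant map onto some point $p$. Restricting $h$ to $S$ yields a homotopy in $M$ between the inclusion $\iota\colon S\hookrightarrow M$ and the constant map $S\to\{p\}$, whose trace lies in $B(z,Lr_0)\subset B(z,r)$. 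Since $\dist(z,x)$ and $\dist(z,y)$ are both at least $r$, this trace is disjoint from $\{x,y\}$, so the hypotheses of the second part of Lemma \ref{hom} are satisfied.

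Applying that lemma forces the set $\{p\}$ to separate $x$ and $y$ in $M$. This is the contradiction, since a single point cannot separate a closed connected topological manifold of dimension $n\geq 1$: for $n=1$ the manifold is $\Sp^1$, whose complement of a point is an open arc; for $n\geq 2$ any Euclidean coordinate chart around $p$ shows that $M\setminus\{p\}$ is path connected. Thus the assumption $\diam(S)<\seprad(S)/L$ is untenable, which is what we wanted to prove.

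The only delicate point is the choice of $r_0$: we need $r_0$ simultaneously large enough to cover $S$ by $B(z,r_0)$ and small enough that $B(z,Lr_0)$ avoids $\{x,y\}$, and we must also verify that $r_0\leq \diam(M)/L$ so that linear local contractibility applies. All three requirements follow at once from the strict inequality $L\cdot\diam(S)<r$ together with $r\leq\seprad(S)\leq\diam(M)$. The trivial cases $S=\emptyset$ or $\seprad(S)=0$ make the inequality automatic and can be dispatched at the outset.
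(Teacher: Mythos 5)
Your argument is correct and follows essentially the same path as the paper's proof: contract $S$ (which has small diameter) to a point inside a ball that avoids $\{x,y\}$, then invoke Lemma \ref{hom} to conclude the single point would have to separate $x$ from $y$, a contradiction. The extra care you take with the choice of $r_0$ and the explicit verification that $r_0\leq\diam(M)/L$ are sound but not materially different from the paper's streamlined version.
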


\begin{proof}
Clearly, we may assume that $\seprad(S) >0$. It suffices to show that $\diam(S) \geq r/L$ whenever $S$ separates two points $x,y \in M$ with $\dist(S, \{x,y\}) \geq r$. Fix such points $x$ and $y$, and suppose for a contradiction that $\diam(S) < r/L$.

Choose $z \in S$ so that $S \subset B(z,r/L)$. By linear local contractibility, $B(z,r/L)$ can be contracted within $B(z,r)$ to a point $z'$. In particular, the inclusion map $S \hookrightarrow M$ is homotopic to the constant map $h(S) = \{z'\}$ through maps whose images are disjoint from $\{x,y\}$. Lemma \ref{hom} then implies that $x$ and $y$ are separated by the point $z'$, which contradicts that $M$ is a closed, connected manifold. 
\end{proof}

\section{A projection lemma} \label{secdef}

Let $\Delta_n$ denote the $n$-dimensional simplex, which we realize geometrically as
$$\Delta_n = \conv(e_1,\ldots,e_{n+1}) \subset \R^{n+1},$$
the convex hull of the standard basis vectors in $\R^{n+1}$. We give $\Delta_n$ the metric coming from the Euclidean metric in $\R^{n+1}$.

\begin{lemma} \label{projlem}
For each $n \geq 1$, there is a constant $C_n \geq 1$ for which the following holds. If $E \subset \Delta_n$ is a closed set with $\mathcal{H}_k(E) < \infty$ and $k < n$, then there is a continuous map $p \colon \Delta_n \rightarrow \Delta_n$, fixing $\partial \Delta_n$ point-wise, for which $p(E) \subset \partial \Delta_n$ and $\mathcal{H}_k(p(E)) \leq C_n \mathcal{H}_k(E)$.
\end{lemma}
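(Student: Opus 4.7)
The strategy is to adapt the classical Federer--Fleming deformation argument to closed sets with finite $\mathcal{H}_k$-measure, via an averaged radial-projection scheme. Fix a ball $B = B(z_0, \rho)$ compactly contained in the interior of $\Delta_n$, with $\rho$ depending only on $n$. For each $x_0 \in B$, let $\pi_{x_0} \colon \Delta_n \setminus \{x_0\} \to \partial \Delta_n$ denote radial projection from $x_0$, sending $y \neq x_0$ to the unique point where the ray from $x_0$ through $y$ first meets $\partial \Delta_n$. This map is continuous, fixes $\partial \Delta_n$ pointwise, and a direct geometric computation shows that its local Lipschitz constant at $y$ is bounded by $C_n / d(x_0, y)$ for a constant $C_n$ depending only on $n$.

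To estimate the image measure, I partition $E$ into dyadic annuli $E_j = \{y \in E : 2^{-j-1} < d(x_0, y) \leq 2^{-j}\}$ centered at $x_0$. On each $E_j$, $\pi_{x_0}$ is uniformly Lipschitz with constant $\lesssim 2^j$, and summing over $j$ gives
\[ \mathcal{H}_k(\pi_{x_0}(E)) \leq C'_n \int_E \frac{d\mathcal{H}_k(y)}{d(x_0, y)^k}. \]
Integrating in $x_0 \in B$ against Lebesgue measure and applying Fubini,
\[ \int_B \mathcal{H}_k(\pi_{x_0}(E))\, dx_0 \leq C'_n \int_E \left( \int_B \frac{dx_0}{d(x_0, y)^k} \right) d\mathcal{H}_k(y). \]
Here the hypothesis $k < n$ enters crucially: in polar coordinates $\int_{|x| \leq R} |x|^{-k}\, dx \sim \int_0^R r^{n-1-k}\, dr$ converges precisely when $k < n$, so the inner integral is uniformly bounded in $y$ and the total is at most $C''_n \mathcal{H}_k(E)$. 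A mean-value argument then produces some $x_0 \in B$ with $\mathcal{H}_k(\pi_{x_0}(E)) \leq C_n \mathcal{H}_k(E)$, and since $\mathcal{H}_k(E) < \infty$ together with $k < n$ force $\mathcal{H}_n(E) = 0$, I may further require $x_0 \in B \setminus E$.

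The remaining task is to upgrade $\pi_{x_0}$ to a continuous self-map of $\Delta_n$, repairing its lone pointwise singularity at $x_0$. Because $x_0 \notin E$ and $E$ is closed, $\delta := \min(\dist(x_0, E), \dist(x_0, \partial \Delta_n)) > 0$. I would set $p(y) = \pi_{x_0}(y)$ for $y \notin B(x_0, \delta/2)$ and extend $p$ continuously across $\overline{B(x_0, \delta/2)}$, using that $\Delta_n$ is contractible to see that $\pi_{x_0}|_{\partial B(x_0, \delta/2)}$ admits a continuous extension into $\Delta_n$. This modification is supported strictly inside $\Delta_n \setminus (E \cup \partial \Delta_n)$, so it preserves $p|_{\partial \Delta_n} = \id$, $p(E) \subset \partial \Delta_n$, and the measure bound. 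I expect the main conceptual obstacle to lie in the averaging step itself: $\pi_{x_0}$ has unbounded local Lipschitz constant, so no direct pointwise bound relates $\mathcal{H}_k(\pi_{x_0}(E))$ to $\mathcal{H}_k(E)$, and averaging over $x_0$ is exactly what converts this local blow-up into a controlled integral via the locally integrable kernel $|x|^{-k}$, with $k < n$ appearing precisely as the threshold for integrability. A mild measurability issue for $x_0 \mapsto \mathcal{H}_k(\pi_{x_0}(E))$ can be sidestepped by working with upper integrals, as in the proof of Corollary \ref{vol}.
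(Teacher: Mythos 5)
Your proposal is correct and follows essentially the same route as the paper: an averaged radial projection from an interior subregion, with the Lipschitz estimate $\mathcal{H}_k(\pi_{x_0}(E)) \lesssim \int_E d(x_0,y)^{-k}\,d\mathcal{H}_k(y)$, Fubini, the integrability threshold $k<n$, and a mean-value selection of a good center $x_0 \notin E$ followed by patching near the singularity. The only cosmetic differences are that the paper integrates over the concentric half-scale simplex $\frac{1}{2}\Delta_n$ rather than a ball, and patches by explicitly interpolating to the identity on a small ball rather than invoking contractibility; these are equivalent.
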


Philosophically, this is a simple case of the Federer--Fleming Deformation Theorem, proved in Section 5 of \cite{FF}. One must be careful, though, as our statement concerns fairly general sets $E$, so representing $E$ as a rectifiable current could lead to problems. As an alternative, we could cite Proposition 3.1 in \cite{DS00}, which proves a more general version of Lemma \ref{projlem}. For the purpose of completeness, though, we give a proof here using only what is really needed.

\begin{proof}
The proof essentially follows that of Lemma 3.22 in \cite{DS00}. Let $\frac{1}{2}\Delta_n \subset \Delta_n$ denote the $n$-dimensional simplex with the same barycenter as $\Delta_n$ but with side-length $\sqrt{2}/2$, i.e., half the side-length of $\Delta_n$. For $y \in \frac{1}{2}\Delta_n$, let $\theta_y \colon \Delta_n \backslash \{y\} \rightarrow \partial \Delta_n$ be the radial projection, so that for each $x \in \Delta_n \backslash \{y\}$, the segment between $y$ and $\theta_y(x)$ contains $x$. Notice that if $K \subset \Delta_n \backslash \{y\}$ is compact, then the restriction of $\theta_y$ to $K$ is Lipschitz with constant $\leq C \dist(y,K)^{-1}$, where $C \geq 1$ is universal. 

From this, it is not difficult to see that if $y \in \frac{1}{2}\Delta_n \backslash E$, then
\begin{equation} \label{projeq}
\mathcal{H}_k(\theta_y(E)) \leq C^k \int_E |x-y|^{-k} d\mathcal{H}_k(x).
\end{equation}
Integrating over $y \in \frac{1}{2}\Delta_n \backslash E$, and using the upper integral to avoid difficulties of measurability, we find that
$$\begin{aligned}
\int_{\frac{1}{2}\Delta_n \backslash E}^\ast \mathcal{H}_k(\theta_y(E))dy &\leq  C^k \int_{\frac{1}{2}\Delta_n} \int_E |x-y|^{-k} d\mathcal{H}_k(x) dy \\
&= C^k \int_E \lp \int_{\frac{1}{2}\Delta_n} |x-y|^{-k} dy \rp d\mathcal{H}_k(x),
\end{aligned}$$
where the equality comes from Fubini's theorem. As $k < n$, the singular integral $\int_{\frac{1}{2}\Delta_n} |x-y|^{-k} dy$ is uniformly bounded over $x \in E$, so we obtain
$$\int_{\frac{1}{2}\Delta_n \backslash E}^\ast \mathcal{H}_k(\theta_y(E))dy \leq C' \cdot \mathcal{H}_k(E),$$
where the constant $C'$ depends only on $n$. Using that $E$ has trivial $n$-dimensional measure, we see that there is $y_0 \in \frac{1}{2}\Delta_n \backslash E$ for which $\mathcal{H}_k(\theta_{y_0}(E)) \leq C_n \mathcal{H}_k(E)$.

Let $r = \dist(E,y_0) >0$, and let $p \colon \Delta_n \rightarrow \Delta_n$ be a continuous map that equals $\theta_{y_0}$ outside $B(y_0,r)$ and is the identity on $B(y_0,r/2)$. Then $p$ fixes $\partial \Delta_n$ point-wise, has $p(E) \subset \partial \Delta_n$, and satisfies $\mathcal{H}_k(p(E)) \leq C_n \mathcal{H}_k(E)$.
\end{proof}

\begin{remark}
It is not necessary for $E \subset \Delta_n$ to be closed for this argument to work. For example, it would suffice to assume that $E \cap K$ is closed for all compact sets $K \subset \Delta_n \backslash \partial \Delta_n$. We will use this formulation in the proof of the following proposition.
\end{remark}

If $\mathcal{S}$ is a finite simplicial complex, then we endow it with the intrinsic (path) metric which makes every $k$-dimensional sub-simplex $\sigma \subset \mathcal{S}$ isometric to $\Delta_k$. The dimension of $\mathcal{S}$ is defined to be the largest dimension of a simplex appearing in $\mathcal{S}$. As is standard, we use $\mathcal{S}^{(k)}$ to denote the $k$-dimensional skeleton of $\mathcal{S}$, which is the simplicial complex consisting of all simplices in $\mathcal{S}$ of dimension at most $k$. 

The following proposition is proved directly from the previous lemma, along with iteration. Here it is important that the maps $p$ in the lemma fix $\partial \Delta_n$.

\begin{prop} \label{proj}
For each $1\leq k \leq n$, there is a constant $c_{k,n} >0$ for which the following holds. Let $\mathcal{S}$ be a simplicial complex of dimension $n$, and let $E \subset \mathcal{S}$ be a closed set with $\mathcal{H}_k(E) \leq c_{k,n}$. Then there is a continuous map $p \colon \mathcal{S} \rightarrow \mathcal{S}$ with $p(\sigma) \subset \sigma$ for each sub-simplex $\sigma \subset \mathcal{S}$, and $p(E) \subset \mathcal{S}^{(k-1)}$.
\end{prop}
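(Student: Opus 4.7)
I would prove this by descending induction on $m$, pushing $E$ from $\mathcal{S}^{(m)}$ into $\mathcal{S}^{(m-1)}$ one dimension at a time for $m = n, n-1, \ldots, k+1$, and then handling the special case $m = k$ at the end. The desired map $p$ will be the composition of the maps built at each stage.

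\emph{Inductive reduction via Lemma \ref{projlem}.} Set $E_n := E$. Suppose for $m \in \{k+1,\ldots,n\}$ we have a closed set $E_m \subset \mathcal{S}^{(m)}$ with $\mathcal{H}_k(E_m) < \infty$. For each $m$-simplex $\sigma$, apply Lemma \ref{projlem} to $E_m \cap \sigma \subset \sigma \cong \Delta_m$ (valid since $k < m$) to obtain a continuous $q_\sigma \colon \sigma \to \sigma$ fixing $\partial \sigma$ pointwise with $q_\sigma(E_m \cap \sigma) \subset \partial \sigma$ and $\mathcal{H}_k(q_\sigma(E_m \cap \sigma)) \leq C_m \, \mathcal{H}_k(E_m \cap \sigma)$. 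Since each $q_\sigma$ is the identity on $\partial \sigma$, these glue with the identity on $\mathcal{S}^{(m-1)}$ to produce a continuous map on $\mathcal{S}^{(m)}$. Extend to each higher-dimensional simplex $\tau$ by coning from the barycenter $c_\tau$: if $x = (1-t)c_\tau + t y$ with $y \in \partial \tau$ and $t \in [0,1]$, send $x$ to $(1-t)c_\tau + t \cdot (\text{value at } y)$. Convexity of $\tau$ keeps the image inside $\tau$, and the result is a continuous $q_m \colon \mathcal{S} \to \mathcal{S}$ with $q_m(\sigma) \subset \sigma$ for every sub-simplex. Setting $E_{m-1} := q_m(E_m) \subset \mathcal{S}^{(m-1)}$, summing the lemma's estimate over $m$-simplices gives $\mathcal{H}_k(E_{m-1}) \leq C_m \, \mathcal{H}_k(E_m)$.

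\emph{The base case $m = k$.} After iterating down to the $k$-skeleton, we have a closed set $E_k \subset \mathcal{S}^{(k)}$ with $\mathcal{H}_k(E_k) \leq (C_n C_{n-1} \cdots C_{k+1}) \, \mathcal{H}_k(E)$. Lemma \ref{projlem} no longer applies, since we would need to project a $k$-dimensional set out of a $k$-simplex, violating the lemma's $k < n$ hypothesis. Choose $c_{k,n} > 0$ so small that $(C_n \cdots C_{k+1}) \cdot c_{k,n}$ is strictly less than $\mathcal{H}_k(\sigma)$ for every $k$-simplex $\sigma$ (note that $\mathcal{H}_k(\sigma)$ is a dimensional constant). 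Then $E_k$ does not cover any single $k$-simplex, and for each $k$-simplex $\sigma$ we may pick $y_\sigma \in \innt(\sigma) \setminus E_k$; closedness of $E_k$ yields an open ball $B(y_\sigma, r_\sigma) \subset \sigma \setminus E_k$. Define $q_k$ on $\sigma$ by: radial projection $\theta_{y_\sigma}$ to $\partial \sigma$ outside $B(y_\sigma, r_\sigma)$, the identity on $B(y_\sigma, r_\sigma/2)$, and convex interpolation in $\sigma$ on the intermediate annulus. This $q_k|_\sigma$ is continuous, fixes $\partial \sigma$ pointwise, and sends $E_k \cap \sigma$ into $\partial \sigma \subset \mathcal{S}^{(k-1)}$. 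Glue with the identity on $\mathcal{S}^{(k-1)}$ and extend to higher-dimensional simplices by the same coning construction as before, producing a continuous $q_k \colon \mathcal{S} \to \mathcal{S}$ with $q_k(\sigma) \subset \sigma$.

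\emph{Conclusion and main obstacle.} The composition $p := q_k \circ q_{k+1} \circ \cdots \circ q_n$ is continuous, satisfies $p(\sigma) \subset \sigma$ for every sub-simplex $\sigma$, and sends $E$ into $\mathcal{S}^{(k-1)}$, as required. The only genuinely delicate step is the base case $m = k$: Lemma \ref{projlem} uses the finiteness of $\int_{\frac{1}{2}\Delta_n} |x-y|^{-k}\, dy$, which fails when one tries to project $k$-dimensional content out of a $k$-simplex by averaging. The smallness assumption $\mathcal{H}_k(E) \leq c_{k,n}$ circumvents this by guaranteeing that in every $k$-simplex there is an interior point missed by the iterated image of $E$, which is all that is needed to set up a single radial projection there. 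The rest of the proof is a mechanical iteration combined with the standard coning extension for simplicial maps.
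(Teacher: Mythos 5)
Your approach is essentially the same as the paper's: iterate Lemma \ref{projlem} to push $E$ down one skeleton at a time (the paper phrases this as induction on $n$ with $k$ fixed; you phrase it as a direct descent in $m$, which is equivalent), handle the level $m=k$ separately using the smallness of $\mathcal{H}_k(E)$ to find an unhit interior point in each $k$-simplex, and glue the simplex-wise projections together, extending to higher skeleta via coning. That part is all correct and matches the paper.

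There is, however, a real wrinkle in your measure accounting in the inductive step. You apply Lemma \ref{projlem} to $E_m \cap \sigma$ for each $m$-simplex $\sigma$ and then ``sum the lemma's estimate'' to conclude $\mathcal{H}_k(E_{m-1}) \leq C_m \mathcal{H}_k(E_m)$. But $\sum_\sigma \mathcal{H}_k(E_m \cap \sigma)$ can greatly exceed $\mathcal{H}_k(E_m)$ when $E_m$ has substantial mass in $\mathcal{S}^{(m-1)}$, since a point in a shared $(m-1)$-face is counted once for every $m$-simplex containing that face. The resulting constant would then depend on the combinatorics of $\mathcal{S}$ (how many top-dimensional simplices meet along a common face), whereas the proposition requires $c_{k,n}$ to depend only on $k$ and $n$. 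The fix is exactly what the paper does: apply Lemma \ref{projlem} to $E_m \cap \innt(\sigma)$ rather than $E_m \cap \sigma$, so that the sets $E_m \cap \mathcal{S}^{(m-1)}$ and $\{E_m \cap \innt(\sigma)\}_\sigma$ form a disjoint partition of $E_m$ and summing is clean. This requires relaxing the closedness hypothesis of Lemma \ref{projlem}, since $E_m \cap \innt(\sigma)$ need not be closed; it suffices that $E_m \cap K$ be closed for compact $K \subset \innt(\sigma)$, which the lemma's proof supports (the paper records this explicitly in the remark following Lemma \ref{projlem}). With this substitution your argument goes through.
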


\begin{proof}
Fix $k \geq 1$. We prove the desired statement by induction on $n$, beginning with the case $n=k$. For the base case, let $c_{k,k} = \mathcal{H}_k(\Delta_k)/2$. If $\mathcal{S}$ is a $k$-dimensional simplicial complex and $E \subset \mathcal{S}$ is closed with $\mathcal{H}_k(E) \leq c_{k,k}$, then for any $k$-dimensional sub-simplex $\sigma \subset \mathcal{S}$, there is a point $x_{\sigma} \in \sigma \backslash E$. Using a radial projection away from $x_{\sigma}$ as we did at the end of the proof of the previous lemma, it is easy to build a continuous map on $\sigma$ that projects $E \cap \sigma$ into $\partial \sigma$ and fixes a small ball around $x_{\sigma}$. Piecing these projections together on the various $k$-dimensional simplices in $\mathcal{S} = \mathcal{S}^{(k)}$, and keeping $\mathcal{S}^{(k-1)}$ fixed, we obtain the desired map $p \colon \mathcal{S} \rightarrow \mathcal{S}$.

For the induction step, suppose we know that the statement holds for some value $n -1 \geq k$, with associated constant $c_{k,n-1} >0$. Let $\mathcal{S}$ be an $n$-dimensional simplicial complex with $E \subset \mathcal{S}$ closed and $\mathcal{H}_k(E) \leq c_{k,n-1}/C_n$, where $C_n$ is the constant from the previous lemma. Let $\sigma_1,\ldots,\sigma_m$ be the collection of $n$-dimensional sub-simplices in $\mathcal{S}$, and let $\innt(\sigma_i) = \sigma_i \backslash \partial \sigma_i$ denote the interior. Then $\mathcal{S}$ is the disjoint union
$$\mathcal{S} = \mathcal{S}^{(n-1)} \cup \innt(\sigma_1) \cup \cdots \cup \innt(\sigma_m).$$
For each $i$, let $p_i \colon \sigma_i \rightarrow \partial \sigma_i$ be the map obtained from Lemma \ref{projlem} applied to the set $E \cap \innt(\sigma_i)$. We should remark here that $E \cap \innt(\sigma_i)$ might not be closed, but its intersection with any compact subset of $\innt(\sigma_i)$ is. This is all that is needed for the proof of the lemma.

As $p_i$ fixes $\partial \sigma_i$, we can piece these together to obtain a map $\tilde{p} \colon \mathcal{S} \rightarrow \mathcal{S}$ that fixes $\mathcal{S}^{(n-1)}$ point-wise, for which $\tilde{p}(E) \subset \mathcal{S}^{(n-1)}$ and $\tilde{p}(\sigma) \subset \sigma$ for every sub-simplex $\sigma \subset \mathcal{S}$. We can then estimate
$$\begin{aligned}
\mathcal{H}_k(\tilde{p}(E)) &\leq \mathcal{H}_k(E \cap \mathcal{S}^{(n-1)}) + \sum_{i=1}^m \mathcal{H}_k(\tilde{p}(E \cap \innt(\sigma_i))) \\
&\leq \mathcal{H}_k(E \cap \mathcal{S}^{(n-1)}) + C_n \sum_{i=1}^m \mathcal{H}_k(E \cap \innt(\sigma_i)) \\
& \leq C_n \mathcal{H}_k(E)
\end{aligned}$$
Thus, $\tilde{E} = \tilde{p}(E) \subset \mathcal{S}^{(n-1)}$ is a closed set with $\mathcal{H}_k(\tilde{E}) \leq c_{k,n-1}$.

By the induction hypothesis, there is a continuous map $\hat{p} \colon \mathcal{S}^{(n-1)} \rightarrow \mathcal{S}^{(n-1)}$ with $\hat{p}(\sigma) \subset \sigma$ for each sub-simplex $\sigma \subset \mathcal{S}^{(n-1)}$ and $\hat{p}(\tilde{E}) \subset \mathcal{S}^{(k-1)}$. The first property ensures that we can continuously extend $\hat{p}$ to be defined on all of $\mathcal{S}$ in such a way that $\hat{p}(\sigma_i) \subset \sigma_i$ for each $n$-dimensional simplex $\sigma_i$ in $\mathcal{S}$. In particular, $\hat{p}(\sigma) \subset \sigma$ for each sub-simplex $\sigma \subset \mathcal{S}$. Now, we can define $p=\hat{p} \circ \tilde{p} \colon \mathcal{S} \rightarrow \mathcal{S}$, which has all of the desired properties. This completes the induction step with constant $c_{k,n} = c_{k,n-1}/C_n$.
\end{proof}

\section{Approximation by simplicial complexes} \label{secapprox}

We now turn our attention to compact metric spaces in general, using nerves of properly chosen coverings to produce simplicial approximations to the original space. This scheme is, in essence, topological, but as our ultimate goal is quantitative, we will need to make everything quantitative. We carry this out by closely following the ideas in Sections 4--5 of \cite{Sem96}.

Let $(M,d)$ be a compact metric space and let $\mathcal{U} = \{U_i\}_{i =1}^\ell$ be a finite open cover of it. We canonically associate to $\mathcal{U}$ a (geometric realization of a) finite simplicial complex, called the nerve of $\mathcal{U}$, as follows. Let $e_1,\ldots,e_\ell$ denote the standard basis vectors in $\R^\ell$. The nerve is defined to be
$$\Ner(\mathcal{U}) = \bigcup \{ \conv(e_{i_1},\ldots,e_{i_m}) : U_{i_1} \cap \cdots \cap U_{i_m} \neq \emptyset \} \subset \R^\ell,$$
where the union is taken over all collections $e_{i_1},\ldots,e_{i_m}$ for which the associated open sets $U_{i_1},\ldots,U_{i_m}$ have non-empty intersection. Note that each convex hull $\conv(e_{i_1},\ldots,e_{i_m})$ is an $(m-1)$-dimensional simplex. We endow $\Ner(\mathcal{U})$ with the intrinsic metric, as we did for simplicial complexes in the previous section. Observe, however, that the intrinsic metric is comparable to the restriction of the Euclidean metric in $\R^\ell$, with absolute constants. This follows from the fact that all simplices in $\Ner(\mathcal{U})$ are convex hulls of standard basis vectors. When the open cover $\mathcal{U}$ is fine enough, we think of $\Ner(\mathcal{U})$ as a good approximation to $M$. This should not be taken too literally, though.

Let $\mathcal{U} = \{U_i\}_{i =1}^\ell$ be a finite open cover of $M$, and let $\delta >0$ be a Lebesgue number of $\mathcal{U}$, so that each ball $B(x, \delta)$ in $M$ lies entirely in one of the open sets $U_i$. Also, let $N = \max \{ \sum_i \chi_{U_i}(x) : x \in M\}$ be the multiplicity of the cover. We can define a Lipschitz map $f\colon M \rightarrow \Ner(\mathcal{U})$ in the following way, where the Lipschitz constant depends only on $\delta$ and $N$. 

For each $1\leq i\leq \ell$, define
$$\phi_i(x) = \min \left\{ 1, \tfrac{2}{\delta} \dist(x,N_{\delta/2}(M\backslash U_i)) \right\},$$
where $N_{\delta/2}(M\backslash U_i)$ denotes the $\delta/2$-neighborhood of $M \backslash U_i$. Then $\phi_i \colon M \rightarrow [0,1]$ is $2/\delta$-Lipschitz and has support contained in $U_i$. Moreover, if $B(x,\delta) \subset U_i$, then $\phi_i(x) = 1$, so we know that $\phi(x) := \sum_i \phi_i(x) \geq 1$ for each $x \in M$. Now define $f_i \colon M \rightarrow [0,1]$ by $f_i(x) = \phi_i(x)/\phi(x)$ for each $1\leq i \leq \ell$. The collection $\{f_i\}_{i=1}^{\ell}$ forms a Lipschitz partition of unity for $M$, subordinate to the cover $\mathcal{U}$, in the sense that each $f_i$ is $(2N+1)/\delta$-Lipschitz with support contained in $U_i$, and $\sum_i f_i(x)=1$ for all $x \in M$.

Finally, we define $f \colon M \rightarrow \Ner(\mathcal{U})$ by
$$f(x) = \sum_{i=1}^\ell f_i(x)e_i,$$
where $e_1,\ldots,e_\ell$ are, again, the standard basis vectors in $\R^\ell$. It is straightforward to show, from the definitions, that $f(x)$ indeed lies in $\Ner(\mathcal{U})$ for each $x \in M$. Furthermore, we note that $f$ is Lipschitz with respect to the Euclidean metric on $\Ner(\mathcal{U})$, and so also is Lipschitz with respect to the intrinsic metric. Here, the Lipschitz constant depends only on $\delta$ and $N$.

In the setting of doubling and linearly locally contractible metric spaces, there is a stronger relationship between $M$ and $\Ner(\mathcal{U})$ than one expects in general, at least for properly chosen covers $\mathcal{U}$. Loosely, there is map $g \colon \Ner(\mathcal{U})\rightarrow M$ that acts as an inverse to the natural Lipschitz map $f$, in the sense that $g\circ f$ is homotopic to the identity on $M$. Proposition \ref{fg}, which is the main result of this section, makes this more precise. Before addressing the proposition, we must establish a couple of lemmas.

\begin{lemma} \label{cover}
Let $(M,d)$ be a compact metric space that is $D$-doubling. For each $\e>0$, there is a finite open cover of $(M,d)$ by sets of diameter at most $\e$, with Lebesgue number at least $\e/4$, and multiplicity at most $D$.
\end{lemma}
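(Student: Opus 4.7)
The plan is to build the cover directly from a maximal $\e/4$-separated set in $M$, a standard construction in doubling spaces. By compactness, a maximal $\e/4$-separated set $\{x_1,\ldots,x_\ell\} \subset M$ exists and is finite: were it infinite, some subsequence would converge, contradicting the $\e/4$-separation. I would then take the open cover $\mathcal{U} = \{U_i\}_{i=1}^\ell$ with $U_i = B(x_i,\e/2)$.

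The diameter bound is immediate since each $U_i$ is a ball of radius $\e/2$. For the Lebesgue number, I would use maximality: every $x \in M$ satisfies $d(x,x_i) < \e/4$ for some $i$ (otherwise $\{x_1,\ldots,x_\ell,x\}$ would still be $\e/4$-separated, contradicting maximality). A triangle inequality argument then shows $B(x,\e/4) \subset B(x_i,\e/2) = U_i$, giving Lebesgue number at least $\e/4$.

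For the multiplicity bound, the key observation is that if $x \in B(x_i,\e/2)$, then $x_i \in B(x,\e/2)$. Hence the number of cover elements containing $x$ equals the number of points from $\{x_1,\ldots,x_\ell\}$ lying in $B(x,\e/2)$. Since these points are $\e/4$-separated, the $D$-doubling hypothesis (in the formulation stated at the start of the paper) bounds this count by $D$.

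There is no real obstacle here; the argument is essentially a packing-versus-covering exchange. The only subtle point is matching the separation scale $\e/4$ to the ball radius $\e/2$ so that the doubling property applies verbatim (separated points at half the radius), and so that the Lebesgue number comes out to exactly $\e/4$ rather than some smaller fraction of $\e$.
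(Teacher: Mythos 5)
Your proof is correct and follows essentially the same approach as the paper: take a maximal $\e/4$-separated set, cover by balls of radius $\e/2$, and use $D$-doubling (in the packing formulation) to bound the multiplicity. The only difference is that you spell out the finiteness and Lebesgue-number steps in a bit more detail than the paper does.
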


\begin{proof}
Let $x_1,\ldots,x_k$ be a maximal $\e/4$-separated set in $M$, so that 
$$M = \bigcup_{i=1}^k B(x_i,\e/4),$$
and let $\mathcal{U} = \{B(x_i,\e/2)\}_{i=1}^k$. Then $\mathcal{U}$ is an open cover of $M$ by sets of diameter at most $\e$, with Lebesgue number at least $\e/4$. Moreover, if 
$$x \in B(x_{i_1},\e/2) \cap \cdots \cap B(x_{i_m},\e/2),$$ 
then $\{x_{i_1},\ldots,x_{i_m} \}$ is an $\e/4$-separated set in the ball $B(x,\e/2)$, so the doubling property ensures that $m \leq D$.
\end{proof}

We will also need the following result, which is a special case of Proposition 5.8 in \cite{Sem96}. Namely, in the notation from that proposition, the statement we record is the case that $X=M=Z$ and the local contractibility function $\rho$ is linear.

\begin{lemma} \label{Sem}
Let $(M,d)$ be a compact metric space that is $D$-doubling, $L$-linearly locally contractible, has topological dimension $n$, and has $\diam(M) \geq 1$. There is $\delta >0$, depending only on $D$, $L$, and $n$, for which the following property holds. If $q_1,q_2 \colon M \rightarrow M$ are continuous maps with $d(q_1(x),q_2(x)) \leq \delta$ for all $x \in M$, then $q_1$ is homotopic to $q_2$ via a homotopy $h \colon [0,1] \times M \rightarrow M$ with $d(h_t(x),x) \leq 1/4$ for all $x \in M$ and $0\leq t \leq 1$.
\end{lemma}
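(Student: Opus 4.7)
The strategy follows the nerve-approximation scheme from Sections 4--5 of \cite{Sem96}, building on the constructions already set up earlier in this section. We will approximate $M$ at a small scale $\e$ (ultimately depending only on $D, L, n$) by a simplicial complex $\mathcal{N}=\Ner(\mathcal{U})$ with coupled maps $f\colon M\to\mathcal{N}$ and $g\colon\mathcal{N}\to M$ such that $g\circ f$ is close to $\id_M$ through a small-track homotopy. To build the desired homotopy from $q_1$ to $q_2$ we first homotope each $q_i$ to $g\circ f\circ q_i$, then bridge the two factored maps by a homotopy constructed simplicially in $\mathcal{N}$ and pushed to $M$ by $g$, and concatenate. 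Because $M$ has topological dimension $n$, we may refine the cover from Lemma \ref{cover} to one whose multiplicity is at most $n+1$ while keeping $\diam(U_i)\le\e$ and Lebesgue number $\ge\e/c_1$; then $\mathcal{N}$ has dimension at most $n$ and the canonical map $f$ is $c_2/\e$-Lipschitz.

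Construct $g\colon\mathcal{N}\to M$ by induction on skeleta: send each vertex $e_i$ to an arbitrary $p_i\in U_i$, and having defined $g$ on $\mathcal{N}^{(k-1)}$, extend over each $k$-simplex $\sigma$ using that $g(\partial\sigma)$ has diameter at most $c_3 L^{k-1}\e$ (by the inductive hypothesis) while $L$-linear local contractibility supplies a continuous extension whose image has diameter at most $c_3 L^k\e$. After $n$ steps we have $\diam(g(\sigma))\le c_4 L^n\e$ for every simplex $\sigma$, valid provided $\e$ is small enough that all relevant contractibility radii remain below $\diam(M)\ge 1$. An immediate consequence is $d(g(f(x)),x)\le c_5 L^n\e$ for all $x\in M$, and an analogous inductive extension over $M\times[0,1]$, carrying the boundary data $(x,0)\mapsto x$ and $(x,1)\mapsto g(f(x))$ by the same skeletal mechanism, produces a homotopy $H\colon M\times[0,1]\to M$ from $\id_M$ to $g\circ f$ with $d(H_t(x),x)\le c_6 L^n\e$ throughout.

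Given $q_1,q_2\colon M\to M$ with $d(q_1,q_2)\le\delta$, build the homotopy by concatenating three pieces: $t\mapsto H(q_1(x),t)$ from $q_1$ to $g\circ f\circ q_1$; a middle homotopy $G=g\circ F$ from $g\circ f\circ q_1$ to $g\circ f\circ q_2$, where $F\colon M\times[0,1]\to\mathcal{N}$ interpolates between $f\circ q_1$ and $f\circ q_2$; and the reverse of $H(q_2(x),t)$ from $g\circ f\circ q_2$ back to $q_2$. If $\delta\le\e/(2c_1)$, the Lebesgue number property ensures that for each $x$ both $q_1(x)$ and $q_2(x)$ lie in some common $U_{j(x)}$, so $f(q_1(x))$ and $f(q_2(x))$ both lie in the closed star of the vertex $e_{j(x)}$ and can be joined (in principle) by a two-step $x$-wise path through $e_{j(x)}$ whose $g$-image has diameter at most $c_7 L^n\e$. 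Choosing $\e$ so that $(c_6+c_7+c_6)L^n\e$ meets the target track bound (noting that, as in this paper's intended application where $q_1=\id$, the hypothesis also implicitly controls $d(q_1(x),x)$ so that the stated form $d(h_t(x),x)\le 1/4$ can be concluded) pins down $\delta=\delta(D,L,n)$. The main obstacle is the continuous construction of $F$: the index $j(x)$ admits no canonical continuous selection in $x$, so $F$ cannot literally be defined via such a choice. The standard remedy, following Semmes, is to define $F$ globally via the continuous partition-of-unity coordinates $f_i\circ q_1$ and $f_i\circ q_2$ through an explicit convex-combination formula in $\R^\ell$, then verify at each $(x,t)$ that the indices $i$ with positive weight correspond to $U_i$'s with a common intersection point (forcing $F(x,t)\in\mathcal{N}$); this verification is exactly where the smallness of $\delta$ relative to the Lebesgue number enters quantitatively.
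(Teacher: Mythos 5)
The paper does not actually prove Lemma~\ref{Sem}; it quotes it directly as a special case of Proposition~5.8 in \cite{Sem96} (the case $X=M=Z$ with a linear local contractibility function). So there is no in-paper proof to compare against, and your sketch has to be assessed on its own terms.

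The main gap is in your construction of the homotopy $H$ from $\id_M$ to $g\circ f$, which you dismiss as ``an analogous inductive extension over $M\times[0,1]$ \dots\ by the same skeletal mechanism.'' This step does not go through as written, for two reasons. First, the convex-combination formula that (correctly) yields your middle homotopy $F$ in the nerve relies on $d(q_1(x),q_2(x))\le\delta$ being \emph{much smaller than} the Lebesgue number $\approx\e/4$ of the cover, which you can arrange by choosing $\delta\ll\e$. But for $H$ the relevant displacement is $d(x,g(f(x)))$, which is of order $(\text{const})\cdot(2L)^{N}\,\e$ -- the \emph{same} order as the Lebesgue number -- so no choice of $\e$ makes the straight-line interpolation between $f(x)$ and $f(g(f(x)))$ stay inside $\Ner(\mathcal{U})$. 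Second, and more fundamentally, the existence of such an $H$ is precisely an instance of the lemma you are trying to prove (take $q_1=\id_M$, $q_2=g\circ f$), so assuming it is circular. Semmes's actual argument (Propositions~5.3 and 5.8 of \cite{Sem96}) goes through a genuinely different tool: a quantitative Tietze-type extension theorem showing that a continuous map from a closed subset $A$ of a finite-dimensional compact metric space $X$ into a doubling, LLC target extends continuously to $X$ with oscillation controlled by the modulus of continuity of the boundary data and the distance to $A$. This is proved by a nerve construction that is \emph{compatible with $A$} and operates at a scale tied to $\dist(\cdot,A)$ -- a multi-scale construction, not the single-scale skeletal extension used for $g$. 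Lemma~\ref{Sem} is then the case $X=M\times[0,1]$, $A=M\times\{0,1\}$, $\phi(x,0)=q_1(x)$, $\phi(x,1)=q_2(x)$.

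Two smaller points. Your refinement of the cover to multiplicity $n+1$ while preserving a lower bound on the Lebesgue number is itself a nontrivial quantitative dimension-theory step (it is not automatic from ``topological dimension $n$''), and it is unnecessary here: Lemma~\ref{cover} already gives multiplicity at most $D$, hence nerve dimension at most $D-1$, which is all the skeletal construction needs. And you correctly flag that the bound $d(h_t(x),x)\le 1/4$ is not achievable in general unless $q_1$ is close to $\id_M$ (already at $t=0$ it forces $d(q_1(x),x)\le 1/4$); this matches Semmes's original statement, where the track bound is relative to $q_1$, and it causes no trouble in this paper because Lemma~\ref{Sem} is only ever applied with $q_1=\id_M$.
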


We can now state and prove the main result of this section. In fact, its proof is quite similar to the proof of Lemma \ref{Sem}, though in some ways it is much simpler.

\begin{prop} \label{fg}
Let $(M,d)$ be a compact metric space that is $D$-doubling, $L$-linearly locally contractible, has topological dimension $n$, and has $\diam(M) \geq 1$. Then there is a finite simplicial complex $\mathcal{S}$ of dimension at most $D$, a $C$-Lipschitz map $f \colon M \rightarrow \mathcal{S}$, and a continuous map $g \colon \mathcal{S} \rightarrow M$ for which the following holds. If $p \colon \mathcal{S} \rightarrow \mathcal{S}$ is a continuous map with $p(\sigma) \subset \sigma$ for each sub-simplex $\sigma \subset \mathcal{S}$, then $g\circ p \circ f \colon M \rightarrow M$ is homotopic to $\id_M$ via a homotopy $h \colon [0,1] \times M \rightarrow M$ with $d(h_t(x),x) \leq 1/4$ for all $x \in M$ and $0\leq t \leq 1$. Here, the Lipschitz constant $C$ depends only on $D$, $L$, and $n$.
\end{prop}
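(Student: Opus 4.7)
The plan is to build $\mathcal{S}$ as the nerve of a sufficiently fine open cover of $M$, take $f$ to be the Lipschitz partition-of-unity map from the preceding discussion, and construct $g$ by an inductive extension over the skeleta of $\mathcal{S}$ using linear local contractibility. Once the covering is fine enough, any $p$ preserving each simplex will make $g \circ p \circ f$ uniformly close to $\id_M$, and Lemma \ref{Sem} then produces the required homotopy.

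More precisely, fix a small parameter $\e > 0$ to be chosen at the end, and apply Lemma \ref{cover} to obtain a cover $\mathcal{U} = \{U_i\}_{i=1}^\ell$ with sets of diameter $\leq \e$, Lebesgue number $\geq \e/4$, and multiplicity $\leq D$. Set $\mathcal{S} = \Ner(\mathcal{U})$; since the multiplicity is at most $D$, each simplex has at most $D$ vertices, so $\mathcal{S}$ has dimension at most $D-1 \leq D$. The map $f$ and its Lipschitz constant (controlled in terms of $\e$ and $D$) come for free from the construction described just above the proposition. To build $g$, I would first choose, for every simplex $\sigma$ of $\mathcal{S}$ with vertices $e_{i_1}, \dots, e_{i_m}$, an anchor point $y_\sigma$ in the nonempty intersection $U_{i_1} \cap \cdots \cap U_{i_m}$. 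On the $0$-skeleton set $g(e_i) = y_i$ for a chosen $y_i \in U_i$, and then extend inductively while preserving the invariant that $g(\sigma) \subset B(y_\sigma, R_k \e)$ on every $k$-simplex $\sigma$, for a controlled sequence $R_0, R_1, \ldots$. The inductive step uses that for any face $\tau$ of $\sigma$ the anchors $y_\tau$ and $y_\sigma$ share a common $U_{i_j}$, so $d(y_\tau, y_\sigma) \leq \e$; combined with the inductive bound this places $g(\partial \sigma)$ inside $B(y_\sigma, (R_{k-1}+1)\e)$. Applying linear local contractibility contracts this ball inside $B(y_\sigma, L(R_{k-1}+1)\e)$, and viewing $\sigma$ as the cone on $\partial \sigma$ turns this contraction into a continuous extension of $g$ over $\sigma$ whose image lies in the same enlarged ball. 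Thus $R_k = L(R_{k-1}+1)$ works, and after at most $D-1$ steps one arrives at a final constant $R$ depending only on $L$ and $D$.

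To verify the homotopy property, fix any $p$ with $p(\sigma) \subset \sigma$ and any $x \in M$, and let $\sigma$ be the carrier of $f(x)$. By the support properties of the $f_i$, the point $x$ lies in each $U_{i_j}$ whose vertex appears in $\sigma$, so in particular $d(x, y_\sigma) \leq \e$. Since $p(f(x)) \in \sigma$, we have $g(p(f(x))) \in g(\sigma) \subset B(y_\sigma, R\e)$, hence $d((g \circ p \circ f)(x), x) \leq (R+1)\e$ uniformly in $x$. Now pick $\e$ small enough that $(R+1)\e \leq \delta$, where $\delta$ is the constant from Lemma \ref{Sem} (and also small enough that each step of the skeletal induction stays below the contractibility threshold $\diam(M)/L \geq 1/L$). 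Applying Lemma \ref{Sem} to $q_1 = g \circ p \circ f$ and $q_2 = \id_M$ then yields the desired homotopy.

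The main obstacle will be the skeletal extension of $g$: each use of linear local contractibility can expand distances by a factor of $L$, and up to $D-1$ steps are needed, so $R$ grows roughly like $L^D$. This forces $\e$ to shrink accordingly, which in turn inflates the Lipschitz constant of $f$; but since $\delta$ from Lemma \ref{Sem} depends only on $D$, $L$, and $n$, so does $\e$, and therefore so do $R$ and the Lipschitz constant of $f$, as required.
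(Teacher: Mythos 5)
Your proposal is correct and follows essentially the same route as the paper: build $\mathcal{S}$ as the nerve of a fine cover from Lemma \ref{cover}, take $f$ to be the Lipschitz partition-of-unity map, construct $g$ by induction over skeleta using linear local contractibility to extend over each simplex with controlled image size, and then conclude via Lemma \ref{Sem}. The only (cosmetic) difference is bookkeeping: you track containment of $g(\sigma)$ in a ball around an anchor point $y_\sigma$ chosen in $\bigcap U_{i_j}$, whereas the paper tracks $\diam(g(\sigma))$ directly; both yield a bound of the form $C(L,D)\,\e$ and the subsequent choice of $\e$ in terms of $\delta$ is the same.
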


\begin{proof}
Let $0<\delta<1$ be the constant from Lemma \ref{Sem}. Let $\mathcal{U} = \{U_i\}_{i=1}^\ell$ be a finite open cover of $M$ given by Lemma \ref{cover}, with $\e = \delta/((D!)(2L)^D)$. Let $\mathcal{S} = \Ner(\mathcal{U})$, which is a finite simplicial complex, and let $N$ be the maximal dimension of a simplex in $\mathcal{S}$. Then $N \leq D-1$, as the multiplicity of $\mathcal{U}$ is at most $D$. The Lebesgue number of $\mathcal{U}$ is at least $\e/4$, so the Lipschitz map $f \colon M \rightarrow \mathcal{S}$ constructed above has Lipschitz constant depending only on $D$ and $\e$, so only on $D$, $L$, and $n$.

Now let us construct $g$ by induction on skeleta. Choose $x_i \in U_i$, and let $g^{(0)} \colon \mathcal{S}^{(0)} \rightarrow M$ by $g^{(0)}(e_i) = x_i$. To define $g^{(1)} \colon \mathcal{S}^{(1)} \rightarrow M$, we proceed as follows. For $\sigma \subset \mathcal{S}$ a $1$-dimensional simplex (i.e., an edge), we have $\partial \sigma = \{e_i,e_j\}$ for some $1\leq i,j \leq \ell$. As $U_i \cap U_j \neq \emptyset$, there is $x \in U_i \cap U_j$ with $x_i,x_j \in B(x,\e)$. Applying $L$-linear local contractibility to this ball $B(x,\e)$, we find that there is a path from $x_i$ to $x_j$ of diameter at most $2L\e$. Define $g^{(1)}$ on $\sigma$ to be a parameterization of this path so that $g^{(1)}$ agrees with $g^{(0)}$ on $\partial \sigma = \{e_i,e_j\}$. Doing this for all edges gives a continuous map $g^{(1)} \colon \mathcal{S}^{(1)} \rightarrow M$ for which the image of each edge has diameter at most $2L\e$.

Suppose that we have constructed a continuous map $g^{(k)} \colon \mathcal{S}^{(k)} \rightarrow M$ for some $1\leq k < N$, such that the image of each $k$-dimensional simplex has diameter at most $(k!)(2L)^k\e$. We define $g^{(k+1)}$ as follows. Let $\sigma \subset \mathcal{S}$ be a $(k+1)$-dimensional simplex, so $\partial \sigma$ is contained in $\mathcal{S}^{(k)}$. In fact, $\partial \sigma$ is a union of $k+1$ simplices of dimension $k$, and the image of each under $g^{(k)}$ has diameter at most $(k!)(2L)^k\e$. As $g^{(k)}(\partial \sigma)$ is connected, we have 
$$\diam(g^{(k)}(\partial \sigma)) \leq ((k+1)!)(2L)^k\e \leq \diam(M)/L.$$
Choosing a point $x \in g^{(k)}(\partial \sigma)$, by linear local contractibility we can contract $g^{(k)}(\partial \sigma)$ inside the ball $B(x,L((k+1)!)(2L)^k\e)$. It is not difficult to obtain from this contraction a continuous map 
$$g^{(k+1)} \colon \sigma \rightarrow B(x,L((k+1)!)(2L)^k\e)$$ 
that agrees with $g^{(k)}$ on $\partial \sigma$ and has image equal to the trace set of the contraction of $g^{(k)}(\partial \sigma)$. Doing this for each $(k+1)$-dimensional simplex $\sigma$ in $\mathcal{S}$, we obtain a continuous map $g^{(k+1)} \colon \mathcal{S}^{(k+1)} \rightarrow M$ that extends $g^{(k)}$, such that the image of each $(k+1)$-dimensional simplex has diameter at most $((k+1)!)(2L)^{k+1}\e$.

As $\mathcal{S}^{(N)} = \mathcal{S}$, we obtain $g=g^{(N)} \colon \mathcal{S} \rightarrow M$ for which the image of any simplex in $\mathcal{S}$ has diameter at most $(N!)(2L)^N\e$. We claim that this is the desired map $g$ in the statement of the lemma.

To show this, let us fix a continuous map $p \colon \mathcal{S} \rightarrow \mathcal{S}$ with $p(\sigma) \subset \sigma$ for each simplex $\sigma \subset \mathcal{S}$. One could think of $p=\id_{\mathcal{S}}$ without loss of ideas. We claim that $g\circ p \circ f(x)$ is close to $x$ for each $x \in M$. Indeed, let $U_{i_1},\ldots,U_{i_m}$ be the sets in $\mathcal{U}$ that contain $x$, so $f(x) \in \sigma = \conv(e_{i_1},\ldots,e_{i_m})$. By assumption, $p(\sigma) \subset \sigma$, and by construction of $g$, we know that 
$$\diam(g(\sigma)) \leq (N!)(2L)^N \e \leq \delta/2.$$ 
As $e_{i_1} \in \sigma$, we have $x_{i_1} = g(e_{i_1}) \in g(\sigma)$, so we can bound
$$\begin{aligned}
d(x,g\circ p \circ f(x)) &\leq d(x,x_{i_1}) + d(x_{i_1}, g\circ p \circ f(x)) \\
&\leq \diam(U_{i_1}) + \diam(g(\sigma)) \\
&\leq \delta/2 + \delta/2 = \delta.
\end{aligned}$$
Applying Lemma \ref{Sem} to the maps $\id_M$ and $g \circ p \circ f$ gives the desired conclusion.
\end{proof}

\section{Codimension-1 volume bounds} \label{secend}

We are now ready to prove Theorem \ref{sep}, which we restate for convenience. Recall, from Remark \ref{isormk}, that Theorem \ref{isothm} follows immediately from it.

\begin{thmsep}
Let $(M,d)$ be a closed, connected, metric manifold of dimension $n \geq 1$ that is $D$-doubling and $L$-linearly locally contractible. If a closed set $S \subset M$ separates two points $x,y$ in $M$ with $\dist(S, \{x,y\}) \geq r$, then 
$$\mathcal{H}_{n-1}(S) \geq c \cdot r^{n-1},$$ 
where $c>0$ depends only on $n$, $D$, and $L$. In particular, for any closed set $S \subset M$, we have $\mathcal{H}_{n-1}(S) \geq c \cdot \seprad(S)^{n-1}$.
\end{thmsep}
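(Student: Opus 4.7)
The plan is to derive a contradiction from assuming $\mathcal{H}_{n-1}(S) < c$ for a suitable threshold $c = c(n,D,L) > 0$. After rescaling the metric we may take $r = 1$, whence $\diam(M) \geq \dist(x,S) \geq 1$ and Proposition \ref{fg} applies. First I would invoke Proposition \ref{fg} to produce a finite simplicial complex $\mathcal{S}$ of dimension $N \leq D-1$, a $C$-Lipschitz map $f \colon M \to \mathcal{S}$ with $C = C(n,D,L)$, and a continuous map $g \colon \mathcal{S} \to M$. Since $f$ is $C$-Lipschitz, $\mathcal{H}_{n-1}(f(S)) \leq C^{n-1}\mathcal{H}_{n-1}(S)$. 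When $N \geq n-1$, Proposition \ref{proj} (with $k = n-1$) then yields a continuous $p \colon \mathcal{S} \to \mathcal{S}$ with $p(\sigma) \subset \sigma$ for every simplex $\sigma \subset \mathcal{S}$ and $p(f(S)) \subset \mathcal{S}^{(n-2)}$, provided $\mathcal{H}_{n-1}(f(S)) \leq c_{n-1,N}$. In the degenerate case $N \leq n-2$ I would simply take $p = \id_{\mathcal{S}}$ and observe that $\mathcal{S}^{(n-2)} = \mathcal{S}$. Setting $c := \min_{n-1 \leq k \leq D-1} c_{n-1,k}/C^{n-1}$ ensures the smallness of $\mathcal{H}_{n-1}(S)$ forces the smallness of $\mathcal{H}_{n-1}(f(S))$ needed in this step.

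Next set $h := g \circ p \circ f \colon M \to M$. By Proposition \ref{fg}, $h$ is homotopic to $\id_M$ via a homotopy $(h_t)$ with $d(h_t(z),z) \leq 1/4$ for all $z \in M$ and $t \in [0,1]$. For each $s \in S$ and each $t$ we then have $d(h_t(s),x) \geq d(s,x) - 1/4 \geq 3/4$, and similarly for $y$, so the trace of the restricted homotopy $(h_t|_S)$ avoids $\{x,y\}$. Lemma \ref{hom} therefore guarantees that
$$h^\ast \colon \check{H}^{n-1}(h(S)) \to \check{H}^{n-1}(S)$$
is non-trivial. The contradiction comes from observing that $h|_S$ factors as
$$S \xrightarrow{\;p \circ f|_S\;} p(f(S)) \xrightarrow{\;g|_{p(f(S))}\;} h(S),$$
where $p(f(S))$ is a closed subset of $\mathcal{S}^{(n-2)}$ and hence a compact metrizable space of covering dimension at most $n-2$. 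The classical vanishing of \v{C}ech cohomology strictly above the covering dimension gives $\check{H}^{n-1}(p(f(S))) = 0$, so by functoriality $h^\ast$ factors through the zero group — contradicting the non-triviality just obtained.

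The hard part is not any single calculation but the interplay of metric, simplicial, and cohomological data: the map $g$ carries no quantitative modulus of continuity, so one has no metric control over $h(S)$ or its topological dimension in $M$. What saves the argument is that the cohomological effect of $h|_S$ is entirely captured by the intermediate image $p(f(S))$ lying in the low-dimensional skeleton, which is precisely why Lemma \ref{hom} was phrased in terms of non-triviality of $h^\ast$ rather than the softer separation statement alone. The cohomology-vanishing input itself is standard, resting on the facts that closed subsets of a separable metric space inherit covering-dimension bounds and that \v{C}ech cohomology of a compact metrizable space vanishes above its covering dimension.
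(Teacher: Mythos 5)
Your proposal is correct and follows the paper's argument essentially step for step: rescale to $r=1$, use Proposition \ref{fg} to produce $\mathcal{S}$, $f$, $g$, use Proposition \ref{proj} to push $f(S)$ into $\mathcal{S}^{(n-2)}$, invoke Lemma \ref{hom} via the short homotopy, and derive a contradiction from the vanishing of $\check{H}^{n-1}$ on the image in the $(n-2)$-skeleton. The only omission is the case $n=1$: there $k=n-1=0$ falls outside the hypotheses of Proposition \ref{proj}, so one should (as the paper does) dispose of $n=1$ separately by noting that a separating set is nonempty and hence has $\mathcal{H}_0 \geq 1$.
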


\begin{proof}
The first statement directly implies the second, so we focus on the former. We can assume that $n \geq 2$, as the $n=1$ case follows from the trivial fact that sets of positive separation radius must be non-empty. Moreover, we may assume that $r=1$. Indeed, the assumptions of $D$-doubling and $L$-linear local contractibility are scale-invariant, and the desired conclusion scales appropriately. In particular, we then have $\diam(M) \geq 1$.

Let $\mathcal{S}$ be the simplicial complex from Proposition \ref{fg}, so that the dimension of each simplex in $\mathcal{S}$ is at most $D$. Let $f \colon M \rightarrow \mathcal{S}$ and $g \colon \mathcal{S} \rightarrow M$ be the corresponding maps, so that $f$ is $C$-Lipschitz, where $C$ also depends only on $D$, $L$, and $n$. Now, let $c_{n-1,D} >0$ be the constant given by Proposition \ref{proj}.

Suppose, for a contradiction, that $\mathcal{H}_{n-1}(S) \leq c_{n-1,D}/C^{n-1}$. Then $f(S) \subset \mathcal{S}$ has $\mathcal{H}_{n-1}(f(S)) \leq c_{n-1,D}$, so by Proposition \ref{proj}, there is a continuous map $p \colon \mathcal{S} \rightarrow \mathcal{S}$ with $p(\sigma) \subset \sigma$ for each sub-simplex $\sigma \subset \mathcal{S}$, and $p(f(S)) \subset \mathcal{S}^{(n-2)}$. Proposition \ref{fg} guarantees that there is a homotopy between $\id_M$ and $g \circ p \circ f$ that moves points in $M$ by distance at most $1/4$.

Restricting this homotopy to the set $S$, we see that $g \circ p \circ f|_S \colon S \rightarrow M$ is a continuous map that is homotopic to the inclusion $S \hookrightarrow M$ through maps whose images are disjoint from $\{x,y\}$. Let $S' = g \circ p \circ f(S)$, so by Lemma \ref{hom}, the induced homomorphism 
$$(g\circ p \circ f)^\ast \colon \check{H}^{n-1}(S') \rightarrow \check{H}^{n-1}(S)$$ 
is non-trivial. Notice, however, that this homomorphism factors into $(g \circ p \circ f)^\ast = (p \circ f)^\ast \circ g^\ast$, where 
$$g^\ast \colon  \check{H}^{n-1}(S') \rightarrow  \check{H}^{n-1}(p\circ f(S))$$ 
and 
$$(p \circ f)^\ast \colon \check{H}^{n-1}(p\circ f(S)) \rightarrow \check{H}^{n-1}(S).$$
As $p\circ f(S) \subset \mathcal{S}^{(n-2)}$, we have $\check{H}^{n-1}(p\circ f(S)) = 0$. This gives immediately that $(g\circ p \circ f)^\ast$ is trivial, which is a contradiction.

Thus, we conclude that $\mathcal{H}_{n-1}(S) > c_{n-1,D}/C^{n-1}$. This proves the theorem with constant $c = c_{n-1,D}/C^{n-1} >0$.
\end{proof}

\begin{bibdiv}
\begin{biblist}

\bib{BK}{article}{
   author={Bonk, M.},
   author={Kleiner, B.},
   title={Quasisymmetric parametrizations of two-dimensional metric spheres},
   journal={Invent. Math.},
   volume={150},
   date={2002},
   number={1},
   pages={127--183},
  % issn={0020-9910},
   %review={\MR{1930885}},
 %  doi={10.1007/s00222-002-0233-z},
}

\bib{BZ}{book}{
   author={Burago, Y.},
   author={Zalgaller, V.},
   title={Geometric inequalities},
   series={Grundlehren der Mathematischen Wissenschaften [Fundamental
   Principles of Mathematical Sciences]},
   volume={285},
   publisher={Springer-Verlag, Berlin},
   date={1988},
   pages={xiv+331},
  % isbn={3-540-13615-0},
   %review={\MR{936419 (89b:52020)}},
   %doi={10.1007/978-3-662-07441-1},
}

\bib{DS00}{article}{
   author={David, G.},
   author={Semmes, S.},
   title={Uniform rectifiability and quasiminimizing sets of arbitrary
   codimension},
   journal={Mem. Amer. Math. Soc.},
   volume={144},
   date={2000},
   number={687},
   pages={viii+132},
  % issn={0065-9266},
  % review={\MR{1683164 (2000i:49062)}},
  % doi={10.1090/memo/0687},
}

\bib{FF}{article}{
   author={Federer, F.},
   author={Fleming, W.},
   title={Normal and integral currents},
   journal={Ann. of Math. (2)},
   volume={72},
   date={1960},
   pages={458--520},
   %issn={0003-486X},
   %review={\MR{0123260}},
}

\bib{GP}{article}{
   author={Greene, R.~E.},
   author={Petersen, P.},
   title={Little topology, big volume},
   journal={Duke Math. J.},
   volume={67},
   date={1992},
   number={2},
   pages={273--290},
 %  issn={0012-7094},
  % review={\MR{1177307}},
  % doi={10.1215/S0012-7094-92-06710-X},
}

\bib{HeinICM}{article}{
   author={Heinonen, J.},
   title={The branch set of a quasiregular mapping},
   conference={
      title={Proceedings of the International Congress of Mathematicians,
      Vol. II },
      %address={Beijing},
      %date={2002},
   },
   book={
      publisher={Higher Ed. Press, Beijing},
   },
   date={2002},
   pages={691--700},
   %review={\MR{1957076}},
}

\bib{HS}{article}{
   author={Heinonen, J.},
   author={Sullivan, D.},
   title={On the locally branched Euclidean metric gauge},
   journal={Duke Math. J.},
   volume={114},
   date={2002},
   number={1},
   pages={15--41},
  % issn={0012-7094},
   %review={\MR{1915034}},
  % doi={10.1215/S0012-7094-02-11412-4},
}

\bib{HW}{book}{
   author={Hurewicz, W.},
   author={Wallman, H.},
   title={Dimension Theory},
   series={Princeton Mathematical Series, v. 4},
   publisher={Princeton University Press, Princeton, N. J.},
   date={1941},
   pages={vii+165},
   %review={\MR{0006493 (3,312b)}},
}

\bib{KleinICM}{article}{
   author={Kleiner, B.},
   title={The asymptotic geometry of negatively curved spaces:
   uniformization, geometrization and rigidity},
   conference={
      title={International Congress of Mathematicians. Vol. II},
   },
   book={
      publisher={Eur. Math. Soc., Z\"urich},
   },
   date={2006},
   pages={743--768},
   %review={\MR{2275621 (2007k:53054)}},
}

\bib{Mey}{article}{
   author={Meyer, D.},
   title={Snowballs are quasiballs},
   journal={Trans. Amer. Math. Soc.},
   volume={362},
   date={2010},
   number={3},
   pages={1247--1300},
 %  issn={0002-9947},
   %review={\MR{2563729 (2011a:30067)}},
  % doi={10.1090/S0002-9947-09-04635-2},
}

\bib{Sem96}{article}{
   author={Semmes, S.},
   title={Finding curves on general spaces through quantitative topology,
   with applications to Sobolev and Poincar\'e inequalities},
   journal={Selecta Math. (N.S.)},
   volume={2},
   date={1996},
   number={2},
   pages={155--295},
  % issn={1022-1824},
  % review={\MR{1414889 (97j:46033)}},
   %doi={10.1007/BF01587936},
}

\bib{SemNon}{article}{
   author={Semmes, S.},
   title={Good metric spaces without good parameterizations},
   journal={Rev. Mat. Iberoamericana},
   volume={12},
   date={1996},
   number={1},
   pages={187--275},
  % issn={0213-2230},
  % review={\MR{1387590}},
  % doi={10.4171/RMI/198},
}

\bib{Spa}{book}{
   author={Spanier, E.},
   title={Algebraic topology},
  % note={Corrected reprint},
   publisher={Springer-Verlag, New York-Berlin},
   date={1981},
   pages={xvi+528},
  % isbn={0-387-90646-0},
  % review={\MR{666554 (83i:55001)}},
}

\bib{Wild}{article}{
   author={Wildrick, K.},
   title={Quasisymmetric structures on surfaces},
   journal={Trans. Amer. Math. Soc.},
   volume={362},
   date={2010},
   number={2},
   pages={623--659},
 %  issn={0002-9947},
  % review={\MR{2551500}},
  % doi={10.1090/S0002-9947-09-04861-2},
}

\end{biblist}
\end{bibdiv}

\end{document}